\DeclareRobustCommand{\SkipTocEntry}[5]{}}{%
\DeclareRobustCommand{\SkipTocEntry}[4]{}}
\theoremstyle{plain}
\newtheorem{thm}{Theorem}[section]
\newtheorem{prop}[thm]{Proposition}
\newtheorem{cor}[thm]{Corollary}
\newtheorem{lem}[thm]{Lemma}
\newtheorem{conj}[thm]{Conjecture}
\theoremstyle{definition}
\newtheorem{defn}[thm]{Definition}
\newtheorem{ex}[thm]{Example}
\newtheorem{rem}[thm]{Remark}
\newcommand{\cone}{{\mathrm{cone}}}
\newcommand{\lspan}{{\mathrm{span}}}
\newcommand{\Tau}{{\mathcal{T}}}
\DeclareMathOperator{\conv}{conv}
\DeclareMathOperator{\ehr}{ehr}
\newcommand{\N}{{\mathds{N}}}
\newcommand{\Q}{{\mathds{Q}}}
\newcommand{\R}{{\mathds{R}}}
\newcommand{\Z}{{\mathds{Z}}}
\newcommand{\Am}{{\mathcal{A}}}
\newcommand{\Bm}{{\mathcal{B}}}
\newcommand{\Sm}{{\mathcal{S}}}
\newcommand{\eg}{e.\,g.,\xspace}
\newcommand{\floor}[1]{{\rleft\lfloor{#1}\rright\rfloor}}
\newcommand{\lFrac}[1]{{%
\underline{\rleft\{\vphantom{#1}\rright.}%
{#1}%
\underline{\rleft\}\vphantom{#1}\rright.}}}
\newcommand{\ulFrac}[1]{{%
\overline{\underline{\rleft\{\vphantom{#1}\rright.}}%
{#1}%
\overline{\underline{\rleft\}\vphantom{#1}\rright.}}}}
\newcommand{\ie}{i.\,e.,\xspace}
\newcommand{\rleft}{\mathopen{}\mathclose\bgroup\left}
\newcommand{\rright}{\aftergroup\egroup\right}
\newcommand{\vect}[1]{{\boldsymbol{\mathbf{#1}}}}
\newcommand{\set}[1]{\rleft\{ {#1} \rright\}}
\newcommand{\with}{\colon}
\newcommand{\vv}{{\vect{v}}}
\newcommand{\vw}{{\vect{w}}}
\newcommand{\vx}{{\vect{x}}}
\newcommand{\vy}{\vect{y}}
\newcommand{\hst}[2]{h^*_{#1}\rleft(#2\rright)} 
\newcommand{\mm}{\mathfrak{m}}
\newcommand{\kk}{\Bbbk}
\DeclareMathOperator{\reg}{reg}
\DeclareMathOperator{\codim}{codim}
\DeclareMathOperator{\Vol}{Vol}
\title{Ehrhart Theory of Spanning Lattice Polytopes}
\author{Johannes Hofscheier}
\address{Faculty of Mathematics, Otto-von-Guericke Universit\"at Magdeburg,
  Postschlie{\ss}fach 4120, 39106 Magdeburg, Germany.}
\curraddr{}
\email{johannes.hofscheier@ovgu.de}
\thanks{}
\author{Lukas Katth\"an}
\address{Institute of Mathematics, Goethe-University Frankfurt,
  Robert-Mayer-Str. 10, 60325 Frankfurt am Main}
\curraddr{}
\email{katthaen@math.uni-frankfurt.de}
\thanks{}
\author{Benjamin Nill}
\address{Faculty of Mathematics, Otto-von-Guericke Universit\"at Magdeburg,
  Postschlie{\ss}fach 4120, 39106 Magdeburg, Germany.}
\curraddr{}
\email{benjamin.nill@ovgu.de}
\thanks{}
\subjclass[2010]{Primary: 52B20; Secondary: 13F20}
\keywords{Lattice polytopes, Ehrhart theory, $h^\ast$-vector,
  Castelnuovo-Mumford regularity, unimodality, half-open triangulations, bounded
  step functions}
\begin{document}
\selectlanguage{english}

\begin{abstract}
  The key object in the Ehrhart theory of lattice polytopes is the numerator
  polynomial of the rational generating series of the Ehrhart polynomial, called
  $h^*$-polynomial. In this paper we prove a new result on the vanishing of its
  coefficients. As a consequence, we get that $h^*_i =0$ implies $h^*_{i+1}=0$
  if the lattice points of the lattice polytope affinely span the ambient
  lattice. This generalizes a recent result in algebraic geometry due to
  Blekherman, Smith, and Velasco, and implies a polyhedral consequence of the
  Eisenbud--Goto conjecture. We also discuss how this study is motivated by
  unimodality questions and how it relates to decomposition results on lattice
  polytopes of given degree. The proof methods involve a novel combination of
  successive modifications of half-open triangulations and considerations of
  number-theoretic step functions.
\end{abstract}

\maketitle{}

\section{Introduction}
\label{sec:intro}

\subsection{Basics of Ehrhart theory}

The study of Ehrhart polynomials of lattice polytopes is an active area of
research at the intersection of discrete geometry, geometry of numbers,
enumerative combinatorics, and combinatorial commutative algebra. We refer to
\cite{Beck14,Braun2016,Breuer15} for three recent survey articles, as well as to
the book \cite{BR:Computing}. In order to describe our main result, let us
recall the basic notions of Ehrhart theory. We denote by \emph{lattice point}
any element in $\Z^d$. A \emph{lattice polytope} $P \subseteq \R^d$ is the
convex hull of finitely many lattice points, \ie $P = \conv \rleft( \vect{v}_1,
\ldots, \vect{v}_n \rright)$ for $\vect{v}_i \in \Z^d$. To a lattice polytope
$P$, one associates its \emph{Ehrhart function} which counts lattice points in
integral multiples of $P$, \ie $\ehr_P(k) = \rleft| kP \cap \Z^d \rright|$. This
is a polynomial function (see \cite{Ehr62}), called the \emph{Ehrhart
  polynomial} of $P$. Its generating function is known to be a rational function
(see \cite{Sta80})
\begin{align*}
  \sum_{k\ge0} \ehr_P ( k ) t^k = \frac{ h_P^*(t) } {(1-t)^{d+1} }
\end{align*}
where $h_P^*(t) \in \Z_{\ge0}[t]$ is a polynomial of degree $s \in \{0, \ldots,
d\}$, denoted the \emph{$h^*$-polynomial} (or $\delta$-polynomial) of $P$. Its
coefficient vector $(h^*_0, \ldots, h^*_d)$, or $\rleft( h^*_0(P), \ldots,
h^*_d(P) \rright)$ if we want to emphasize that these are the coefficients of
the $h^*$-polynomial of $P$, is the \emph{$h^*$-vector} (or $\delta$-vector) of
$P$. The number $\deg \rleft(P \rright) \coloneqq s$ is called the \emph{degree}
of $P$. For future reference, let us give the basic properties of the
$h^*$-vector of a $d$-dimensional lattice polytope $P$ of degree $s$:

\begin{align}
  h^*_0 &=1 \label{basic1},\\
  h_1^* &= \rleft| P \cap \Z^d \rright| - d - 1 \label{basic2},\\
  h_d^* &= \rleft| P^\circ \cap \Z^d \rright| \label{basic3},\\
  d + 1 - s &= \min \rleft \{ k \in \Z_{>0} \colon (k P)^\circ \cap
              \Z^d \neq \emptyset \rright\} \label{basic4},\\
  \sum_{i=0}^s h^*_i &= \Vol_\Z(P) \label{basic5},
\end{align}

where $\Vol_\Z(P)$ denotes the \emph{normalized volume} of $P$, \ie it equals
$d!$ times the usual Euclidean volume of $P$, and $P^\circ$ denotes the relative
interior of $P$, \ie the topological interior of $P$ in its affine span.

\subsection{Spanning lattice polytopes}

Let us explain what we mean by \enquote{spanning} in the title.

\begin{defn}
  A $d$-dimensional lattice polytope $P \subseteq \R^d$ is called
  \emph{spanning} if any lattice point in $\Z^d$ is an affine integer
  combination of the lattice points in $P$. Equivalently, $P$ is spanning if any
  lattice point in $\Z^{d+1}$ is a linear integer combination of the lattice
  points in $\{ 1 \} \times P$.
\end{defn}

\begin{ex}
  \label{ex:empty}
  Any $1$- and $2$-dimensional lattice polytope is spanning. For any $k \ge 2$,
  the Reeve-simplex is not spanning:
  \begin{align*}
    \conv( \vect{0}, \vect{e}_1, \vect{e}_2, \vect{e}_1 + \vect{e}_2 + k \vect{e}_3 )
  \end{align*}
  where $\vect{e}_1, \vect{e}_2, \vect{e}_3 \in \Z^3$ denotes the standard
  basis. We remark that $h^*_P(t) = 1 + (k-1) t^2$.
\end{ex}

Spanning is a very mild condition for a lattice polytope (for instance, it is
weaker than \enquote{very ample}, cf. \cite{Bru13}). In fact, \emph{any} lattice
polytope is associated to a spanning lattice polytope by a change of the ambient
lattice (replace $\Z^d$ by the lattice affinely spanned by $P \cap
\Z^d$). Especially in toric geometry it is natural to pass from the ambient
lattice to the spanning lattice, \eg for fake weighted projective spaces
(see \cite{Con02,Kas09}) or in the study of $A$-discriminants (see
\cite{Est10,Ito15}).

\smallskip

\subsection{Our main result}

In this work we initiate the study of Ehrhart polynomials of spanning lattice
polytopes.  The main goal of this paper is the following lower bound theorem on
their $h^*$-vectors. It is a direct consequence of a new, general
Ehrhart-theoretic result (Theorem~\ref{thm:nog-vG}), which applies to arbitrary
lattice polytopes, and can be found in Section~\ref{nog-label}.

\begin{thm}
  \label{thm:nog}
  The $h^*$-vector of a spanning polytope $P$ satisfies $h^*_i \ge 1$ for all $i
  = 0, \ldots, \deg(P)$.
\end{thm}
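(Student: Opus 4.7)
The trivial cases are $h^*_0 = 1$ by~\eqref{basic1}, and $h^*_{\deg P} \ge 1$ since $\deg P$ is by definition the largest index with $h^*_{\deg P} \neq 0$ and $h^*$-coefficients are non-negative integers. It therefore suffices to establish $h^*_i \ge 1$ for $1 \le i \le \deg(P) - 1$, ruling out internal zeros.

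My plan is to interpret each $h^*_i$ as a count of lattice points at height $i$ in the half-open fundamental parallelepipeds associated to a lattice triangulation of $P$. Concretely, for any lattice triangulation $\Tau$ of $P$ together with a suitable half-open structure, one has a decomposition
\begin{equation*}
  h^*_P(t) = \sum_{\sigma \in \Tau} \sum_{p} t^{\height(p)},
\end{equation*}
where the inner sum ranges over lattice points $p$ in the fundamental parallelepiped of $\cone(\{1\} \times \sigma)$ with a prescribed subset of its facets removed. Showing $h^*_i \ge 1$ thus amounts to producing one such lattice point at height $i$ in some half-open parallelepiped. The spanning hypothesis enters in two ways: first, since $P \cap \Z^d$ affinely generates $\Z^d$, the polytope $P$ contains a unimodular sub-simplex $\Delta_0$, which can be placed as the initial simplex of $\Tau$ so that its closed parallelepiped contributes exactly $t^0$ and accounts for $h^*_0$; second, $\{1\} \times (P \cap \Z^d)$ $\Z$-spans $\Z^{d+1}$, so any lattice point at the maximal height $\deg P$ can be shifted down to any lower target height by subtracting integer combinations of generators $(1, w)$ with $w \in P \cap \Z^d$.

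The main obstacle is that, after such a downward shift, the resulting lattice point need not lie in the half-open parallelepiped of any simplex of $\Tau$. Overcoming this requires a step-by-step modification of the half-open structure while shifting, so that at each intermediate height the current lattice point actually lies in some fundamental parallelepiped of the current half-open decomposition. I expect this is where the \emph{successive modifications of half-open triangulations} and the \emph{bounded step functions} mentioned in the abstract come in, providing the bookkeeping needed to guarantee a box point at every height between $0$ and $\deg P$, and thus the desired lower bound $h^*_i \ge 1$.
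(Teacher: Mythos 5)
Your setup is the right one---Equation~\eqref{h-stern-equ}, expressing $h^*_i$ as the number of lattice points of $h^*$-height $i$ in the half-open fundamental parallelepipeds of a half-open triangulation, is exactly the paper's starting point, and you have correctly located the difficulty: subtracting a generator $(1,\vect{w})$ with $\vect{w}\in P\cap\Z^d$ lowers the first coordinate of a lattice point of $C$, but it does \emph{not} lower its $h^*$-height $h^*_{\Tau,\xi}$ in any controlled way, so one has to modify the half-open triangulation as one shifts. But the proposal stops precisely at that point: the sentence ``I expect this is where the successive modifications \dots\ come in, providing the bookkeeping'' \emph{is} the theorem, and no argument is given for it. The paper's resolution (Theorem~\ref{thm:nog-vG}, proved in Sections~\ref{sec:ehrhart} and~\ref{sec:bigproof}) rests on three nontrivial ingredients, none of which appears in your sketch: (i) for a \emph{fixed} triangulation $\Tau$ and lattice point $\vect{x}$, the set of values $h^*_{\Tau,\xi}(\vect{x})$ as $\xi$ varies is an integer interval (Proposition~\ref{prop:key-obs}, via the simplicial complex $\Lambda_{\Tau,\sigma}$); (ii) two triangulations differing by a flip are compared, after reduction to a corank-one circuit, by analysing the jump discontinuities of a periodic bounded step function (Lemma~\ref{lem:fliplemma-crk-1}); (iii) for each generator $\vect{v}_k$ one passes to a pulling triangulation with $\vect{v}_k$ pulled last, so that adding $\vect{v}_k$ leaves the $h^*$-height unchanged. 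That the bookkeeping cannot be waved away is shown by Example~\ref{ex:change-triang}: varying $\Tau$ and $\xi$ simultaneously without this care, the set of achieved heights can have gaps (there it is $\{2,4\}$). As it stands, then, the proposal is a correct reduction of the theorem to its hard part, not a proof.

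Two smaller points. First, your claim that a spanning polytope contains a unimodular sub-simplex $\Delta_0$ is unjustified: spanning only says that $P\cap\Z^d$ affinely \emph{generates} $\Z^d$, not that some $d+1$ of these points form an affine lattice basis, and you give no argument. Fortunately it is also unnecessary, since $h^*_0=1$ holds for every lattice polytope by~\eqref{basic1}, with no hypothesis. Second, the phrase ``the resulting lattice point need not lie in the half-open parallelepiped of any simplex of $\Tau$'' is slightly off: by Propositions~\ref{prop:hoc-tile-c} and~\ref{prop:hop-tile-hoc} every lattice point of $C$ lies in exactly one half-open cell and hence has a well-defined representative $\ulFrac{\vect{v}}_{\Tau,\xi}$ in a fundamental parallelepiped; the issue is not membership but the value of the resulting height, and stating this precisely is the first step toward making the induction work.
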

Recall that an $h^*$-vector satisfying the conclusion of Theorem~\ref{thm:nog}
is said to have \emph{no internal zeros} (see, for instance,
\cite{Stanley:Log-concave}). We remark that the $h^{*}$-vector of any lattice
polytope with interior lattice points has no internal zeros by Hibi's lower
bound theorem (see \cite{hibilower}).

\begin{ex}
  \label{ex:nonsp-nog}
  The converse of Theorem~\ref{thm:nog} is not true. In dimension $d \ge 3$,
  there are non-spanning lattice polytopes whose $h^{*}$-vectors have no
  internal zeros. For instance, the lattice polytope $P \coloneqq \conv \rleft(
  \vect{0}, \vect{e}_1, \vect{e}_3, 2 \vect{e}_1 + 4 \vect{e}_2 + \vect{e}_3
  \rright)$ is not spanning and satisfies $h^*_P (t) = 1 + t + 2 t^2$.
\end{ex}

\smallskip

\subsection{Motivation from unimodality questions}

Let us explain why one should view Theorem~\ref{thm:nog} as an example of a
positive result in the quest for unimodality results for $h^*$-vectors of
lattice polytopes. We refer to the survey \cite{Braun2016} for motivation and
background.

We recall that a lattice polytope $P$ is \emph{IDP} (with respect to $\Z^d$) if
for $k \in \Z_{\ge 1}$ any lattice point $\vect{m} \in (k P) \cap \Z^d$ can be
written as $\vect{m} = \vect{m}_1 + \cdots + \vect{m}_k$ for $\vect{m}_1, \dots,
\vect{m}_k \in P \cap \Z^d$. IDP stands for \enquote{integer decomposition
  property}, a condition also referred to as being \emph{integrally-closed}. One
of the main open questions about IDP lattice polytopes (see
\cite{Stanley:Log-concave,OH:Gorenstein,Schepers13}) is whether their
$h^*$-vectors are \emph{unimodal}, \ie their coefficients satisfy $h^*_0 \le
h^*_1 \le \cdots \le h^*_i \ge h^*_{i+1} \ge \cdots \ge h^*_s$ for some $i \in
\{0, \ldots, s\}$. Theorem~\ref{thm:nog} is a modest analogue of this
conjecture. Clearly, IDP implies spanning, and unimodality implies no internal
zeros.

\begin{ex}
  \label{gabriele}
  The $5$-dimensional lattice simplex with vertices
  \begin{align*}
    \vect{0}, \vect{e}_1, \ldots, \vect{e}_4, \vect{v} \coloneqq
    5\rleft( \vect{e}_1 + \ldots + \vect{e}_4 \rright) + 8 \vect{e}_5
  \end{align*}
  is spanning with $h^*$-vector $(1,1,2,1,2,1)$, \ie not unimodal. We have used
  the software \texttt{polymake} (see \cite{polymake}) to compute the
  $h^*$-vector and the lattice points contained in the simplex which are exactly
  the vertices and the point $\vect{w} \coloneqq 2 \rleft( \vect{e}_1 + \ldots +
  \vect{e}_4 \rright) + 3 \vect{e}_5$. As $\vect{e}_5 = 2 \vect{v} - 5
  \vect{w}$, it follows that the simplex is spanning.
\end{ex}

From the viewpoint of commutative algebra, it was already evident that IDP
implies no internal zeros. Theorem~\ref{thm:nog} provides a new combinatorial
proof of this fact.  Indeed, the Ehrhart ring associated to an IDP polytope $P$
(cf.~\cite[Section~4]{BG}) is standard graded and Cohen--Macaulay, so its
quotient modulo a linear system of parameters yields a standard graded Artinian
algebra whose Hilbert series equals $h^*_P(t)$, which clearly has no internal
zeros.  Let us remark that for spanning lattice polytopes it is unclear whether
such an algebraic proof exists, the difficulty being that the Ehrhart ring of
non-IDP lattice polytopes is not standard graded.

Another conjecture of interest is Oda's question whether every smooth lattice
polytope is IDP \cite{Gub12}. Here, a lattice polytope is \emph{smooth} if the
primitive edge directions at each vertex form a lattice basis. As smooth
polytopes are spanning, Theorem~\ref{thm:nog} shows that the condition of having
no internal zeros cannot be used to distinguish between smoothness and IDP.

\smallskip

The methods of the proof of Theorem~\ref{thm:nog} combine modifications of
half-open triangulations and considerations of number-theoretical step
functions. We hope that these methods will also be fruitful to prove stronger
inequalities on the coefficients of $h^*$-polynomials. Let us remark that
Schepers and van Langenhoven (see \cite{Schepers13}) suggested that a successive
change of lattice triangulations should be essential in achieving new
unimodality results in Ehrhart theory. In this sense, our results and methods
could be seen as a first implementation of their proposed approach.

\subsection{Organization of the paper}

In Section 2 we explain how Theorem~\ref{thm:nog} implies a consequence of the
Eisenbud--Goto conjecture from commutative algebra in this polyhedral setting and
give some combinatorial consequences.  Theorem~\ref{thm:nog} can be seen as a
generalization of a recent result on the vanishing of the second coefficient of
the $h^*$-polynomial (see \cite{Blek16}). This observation and applications to
decomposition results of lattice polytopes of given degree are discussed in
Section 3. In Section 4 we recall the language of half-open decompositions and
describe how Theorem~\ref{thm:nog} follows from Theorem~\ref{thm:nog-vG}, a
general result in Ehrhart theory. Section~\ref{sec:bigproof} contains the proof
of Theorem~\ref{thm:nog-vG}.

\section*{Acknowledgments}
The authors would like to thank Christian Haase for several inspiring
discussions and Gabriele Balletti for his supply of computational data, \eg
Example~\ref{gabriele}, as well as many fruitful discussions. The second author
thanks KTH Stockholm and Stockholm University for their hospitality. The third
author is an affiliated researcher with Stockholm University and partially
supported by the Vetenskapsr{\aa}det grant~NT:2014-3991.

\section{Application 1: Polyhedral Eisenbud--Goto}
\label{sec:appl2}

One of the original motivations of the present work is a connection with the
famous Eisenbud--Goto conjecture from commutative algebra, which we explain in
this section.  For the algebraic concepts used in this chapter, we refer the
reader to the monographs by Eisenbud \cite{eisenbud} or Brodmann and Sharp
\cite{BS}.  Let us recall the statement of the conjecture:
\begin{conj}[Eisenbud--Goto conjecture \cite{EG}]\label{conj:EG}
  Let $\kk$ be a field and let $S = \kk[X_1,\dotsc, X_{n}]$ be a polynomial ring
  with the standard grading, and let $I \subseteq S$ be a homogeneous prime
  ideal. Then
  \begin{equation}\label{eq:EG}
    \reg(S/I) \leq \deg(S/I) - \codim(S/I)
  \end{equation}
\end{conj}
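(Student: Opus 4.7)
The Eisenbud--Goto inequality is a classical conjecture whose general case has resisted attack for decades, so any honest proposal must acknowledge that what follows is a framework rather than a proof. My plan is to work in three stages: first establish the Cohen--Macaulay case via Artinian reduction, then attempt to bootstrap to the general case by hyperplane section induction, and finally wrestle with the local cohomology of $S/I$ that obstructs this induction.

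The Cohen--Macaulay case, originally settled by Eisenbud and Goto, proceeds as follows. Let $r = \dim(S/I)$ and choose a generic sequence of linear forms $\ell_1, \dots, \ell_r$, which is automatically a regular sequence on $S/I$ by prime avoidance. Set $A \coloneqq S/(I, \ell_1, \dots, \ell_r)$. Then $\reg(A) = \reg(S/I)$, $\dim_\kk A = \deg(S/I)$, and the embedding dimension of $A$ equals $\codim(S/I)$. Since $A$ is standard graded Artinian with $\dim_\kk A_0 = 1$, $\dim_\kk A_1 = \codim(S/I)$, and $A_i \neq 0$ for each $i$ up to $\reg(A)$, a straightforward Hilbert function count gives $\dim_\kk A \geq \codim(S/I) + \reg(A)$, which rearranges to the desired inequality.

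For the general case, the natural plan is to take a general linear form $\ell$ and exploit the four-term exact sequence
\begin{equation*}
  0 \to (0 :_{S/I} \ell)(-1) \to (S/I)(-1) \xrightarrow{\cdot \ell} S/I \to S/(I,\ell) \to 0,
\end{equation*}
split it into two short exact sequences, and invoke the standard bound that regularity in a short exact sequence of graded modules is controlled by the regularities of the outer terms. Inducting on $\dim(S/I)$ would reduce everything to the Artinian case, provided one can bound $\reg(S/(I,\ell))$ by $\reg(S/I)$ plus a controllable error term, the degree being preserved under a generic hyperplane section and the codimension dropping by one. The primality hypothesis on $I$ ensures that $S/I$ is a domain, so the kernel term $(0 :_{S/I} \ell)$ vanishes for any nonzero $\ell$, which is the one simplification that primality genuinely buys.

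The decisive obstacle is control of the local cohomology modules $\Hm^i_\mm(S/I)$ for $0 < i < \dim(S/I)$, which measure the failure of $S/I$ to be Cohen--Macaulay and which propagate across hyperplane sections in ways not bounded by degree and codimension alone. By the cohomological definition of Castelnuovo--Mumford regularity, it is precisely these modules that prevent one from transferring a sharp regularity bound across a hyperplane section, and they are what one must tame to close the induction. The known affirmative cases (curves by Gruson--Lazarsfeld--Peskine, smooth surfaces by Lazarsfeld, arithmetically Cohen--Macaulay quotients, and specific combinatorial classes such as toric coordinate rings of IDP lattice polytopes) each succeed because a geometric or combinatorial device supplies the missing cohomological input. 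At the stated level of generality -- arbitrary homogeneous prime ideals in any standard graded polynomial ring -- no such device is presently available, so I would flag the non--Cohen--Macaulay hyperplane section step as the genuine wall, and present the three-stage framework above as the natural approach that is consistent with all the partial results in the literature without claiming to close the conjecture.
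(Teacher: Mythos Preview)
The statement you are attempting to prove is labeled \emph{Conjecture} in the paper, and the paper does not prove it; indeed, immediately after stating it the authors note that McCullough and Peeva \cite{MP:Counterexample} found counterexamples. So the Eisenbud--Goto inequality is \emph{false} in the generality stated, and no proof proposal can succeed. Your framework is historically accurate --- the Cohen--Macaulay reduction is exactly Eisenbud and Goto's original argument, and the hyperplane-section induction with local cohomology as the obstruction is the standard line of attack --- but the ``genuine wall'' you identify in the non--Cohen--Macaulay step is not merely a technical obstacle awaiting the right device: it is insurmountable, because the conclusion fails.

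What the paper actually does is extract a polyhedral \emph{consequence} of Conjecture~\ref{conj:EG} (namely Proposition~\ref{prop:pEG}, the inequality $h^*_1 + \deg(P) \le \Vol_\Z(P)$ for spanning lattice polytopes) and prove that consequence unconditionally via their combinatorial main result, Theorem~\ref{thm:nog}. The logic is: had Eisenbud--Goto been true, applying it to the toric ring $\kk[P]$ would yield Proposition~\ref{prop:pEG}; since Eisenbud--Goto is false, that route is closed, but Theorem~\ref{thm:nog} gives an independent combinatorial proof of the same inequality. Your proposal, by contrast, aims at the full conjecture and therefore at a false target. If the task was to engage with Conjecture~\ref{conj:EG} as it functions in this paper, the correct response is to observe that it is disproven and to focus instead on how the authors salvage its polyhedral shadow.
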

Here, $\reg(S/I)$ denotes the \emph{(Castelnuovo--Mumford) regularity} of $S/I$,
which is defined as
\begin{align*}
  \reg(S/I) \coloneqq \sup\set{i + j \with i,j \in
  \N_0, H_\mm^i(S/I)_j \neq 0} \text{,}
\end{align*}
where $\mm = (X_1, \dotsc, X_n)$ is the maximal homogeneous ideal of $S$ and
$H_\mm^i(S/I)_j$ denotes the $j$-th homogeneous component of the $i$-th local
cohomology module of $S/I$ with support in $\mm$.  Further, the \emph{degree} of
$S/I$, denoted by $\deg(S/I)$, can be defined as $(\dim (S/I)-1)!$ times the
leading coefficient of the Hilbert polynomial of $S/I$.  Moreover, $\codim(S/I)
= \dim_\kk (S/I)_1 - \dim(S/I)$ is the codimension of $S/I$ (inside its linear
hull).

Very recently, McCullough and Peeva \cite{MP:Counterexample} found
counterexamples to this conjecture.  However, we are going to show that a
certain consequence of it is nevertheless true.  Let $P \subseteq \R^d$ be a
$d$-dimensional lattice polytope, and let $\kk$ be an algebraically closed field
of characteristic $0$.  We denote by $\kk[P]$ the {\em toric ring} generated by
the lattice points in $P$, \ie the subalgebra of $\kk[Y_0, \dotsc, Y_{d}]$
generated by the monomials
\begin{align*}
  Y_0 \cdot \prod_i Y_{i}^{v_i} \qquad\text{with } \vv =
  \rleft( v_1, \dotsc, v_d \rright) \in P \cap \Z^d\text{.}
\end{align*}

The algebraic invariants on the right-hand side of \eqref{eq:EG} have a
combinatorial interpretation for $S/I = \kk[P]$:
\begin{align*}
  \deg(\kk[P]) &= \Vol_{\Gamma_{P}}(P)  \\
  \codim(\kk[P]) &= |P \cap \Z^{d}| - (d+1) \text{.}
\end{align*}
Here, $\Vol_{\Gamma_{P}}$ is the volume form normalized with respect to the
affine lattice generated by the lattice points in $P$. In particular, if $P$ is
spanning, then this simply equals $\Vol_\Z(P)$.

The regularity of $\kk[P]$ does not have a direct combinatorial interpretation.
However, if $P$ is spanning and $\vv = \rleft( v_{0}, \ldots, v_{d} \rright)$ is
an interior lattice point of the cone $C$ over $P$, then $H_\mm^{d+1}( \kk[P]
)_{-v_0} \neq 0$, cf. \cite[Theorem~5.6]{ss} or
\cite[Proposition~4.2]{depthholes}. Thus, if we let $r \in \Z_{>0}$ be the
minimal value for the first coordinate of an interior lattice point in $C$, \ie
the minimal number such that the multiple $rP$ of $P$ has an interior lattice
point, then it holds that $\reg(\kk[P]) \geq d+1 - r$.

In conclusion, the following proposition is a consequence of
Conjecture~\ref{conj:EG}:
\begin{prop}
  \label{prop:pEG}
  Let $P \subseteq \R^{d}$ be a ${d}$-dimensional spanning lattice
  polytope. Then the following holds:
  \begin{equation}\label{eq:vol}
    |P \cap \Z^{d}| \leq \Vol_\Z(P) +
    \min\set{k \in \Z_{> 0} \with (kP)^\circ \cap
      \Z^{d} \neq \emptyset}
  \end{equation}
  or equivalently,
  \begin{equation}\label{eq:vol2}
    h^*_1 + \deg(P) \le \Vol_\Z(P)\text{.}
  \end{equation}
\end{prop}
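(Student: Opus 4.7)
The plan is to first reduce the two stated inequalities to each other using the basic identities \eqref{basic2} and \eqref{basic4}, and then derive \eqref{eq:vol2} directly from Theorem~\ref{thm:nog} together with the volume identity \eqref{basic5}.

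For the equivalence of \eqref{eq:vol} and \eqref{eq:vol2}, I would substitute $|P \cap \Z^d| = h^*_1 + d + 1$ from \eqref{basic2} and $\min\{k \in \Z_{>0} : (kP)^\circ \cap \Z^d \neq \emptyset\} = d + 1 - \deg(P)$ from \eqref{basic4} into \eqref{eq:vol}. After the common term $d + 1$ cancels on both sides, the inequality \eqref{eq:vol} turns into $h^*_1 + \deg(P) \le \Vol_\Z(P)$, which is precisely \eqref{eq:vol2}.

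To prove \eqref{eq:vol2}, let $s \coloneqq \deg(P)$. By \eqref{basic5} and \eqref{basic1} we have
\begin{align*}
  \Vol_\Z(P) \;=\; \sum_{i=0}^{s} h^*_i \;=\; 1 + h^*_1 + \sum_{i=2}^{s} h^*_i.
\end{align*}
Since $P$ is spanning, Theorem~\ref{thm:nog} gives $h^*_i \ge 1$ for every $i \in \{0,\ldots,s\}$, and in particular $\sum_{i=2}^{s} h^*_i \ge s - 1$ (this sum being empty and trivially $\ge -1$ when $s \le 1$). Combining these yields $\Vol_\Z(P) \ge 1 + h^*_1 + (s-1) = h^*_1 + s$, which is exactly \eqref{eq:vol2}. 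The degenerate cases $s = 0$ and $s = 1$ should be checked separately but are immediate: when $s = 0$ we have $h^*_1 = 0$ and $\Vol_\Z(P) = 1$, and when $s = 1$ the inequality reduces to an equality by \eqref{basic5}.

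There is no real obstacle: once the equivalence of the two formulations is established via the standard Ehrhart identities, the nontrivial content is entirely packaged into Theorem~\ref{thm:nog}, whose role is to guarantee that none of the coefficients $h^*_2, \ldots, h^*_s$ vanishes. Hence this proposition is best viewed as a direct corollary of the main theorem.
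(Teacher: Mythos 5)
Your proposal is correct and follows essentially the same route as the paper: both reduce \eqref{eq:vol} to \eqref{eq:vol2} via \eqref{basic2} and \eqref{basic4}, and then deduce \eqref{eq:vol2} from $\Vol_\Z(P)=\sum_i h^*_i$ together with Theorem~\ref{thm:nog}. The separate treatment of $s=0$ and $s=1$ is unnecessary since the empty-sum bound already covers those cases, but it does no harm.
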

As the Eisenbud--Goto conjecture has been disproven in \cite{MP:Counterexample}, we show that this
inequality is also a consequence of our main result Theorem~\ref{thm:nog}. Let
us remark that \eqref{eq:vol2} is sharp for every value of $\deg(P)$, as can be
seen by considering the lattice simplices $\conv( \vect{e}_{1}, \ldots,
\vect{e}_{d}, -\vect{e}_{1} - \ldots - \vect{e}_{d})$.

\begin{proof}
  Equations \eqref{eq:vol} and \eqref{eq:vol2} are equivalent by the properties
  \eqref{basic2} and \eqref{basic4} of $h^*$-vectors. By properties
  \eqref{basic1} and \eqref{basic5}, we can reformulate \eqref{eq:vol2} as
  \begin{align*}
    \deg(P) \leq 1 + \sum_{i=2}^{\deg(P)} h^*_i(P) \text{.}
  \end{align*}
  This equation holds as $h^*_i(P) \ge 1$ for $2 \leq i \leq \deg(P)$ by
  Theorem~\ref{thm:nog}.
\end{proof}

\begin{ex}
  In dimension $5$ there exists a non-spanning lattice simplex with binomial
  $h^*$-polynomial $1 + t^3$ (see, for instance, \cite[end of Section~2]{Hibi11}
  or \cite[paragraph below Lemma~1.3]{HT:LowerBounds}). Hence, the left side in
  Equation~\eqref{eq:vol} equals $6$, while the right side equals $2+3=5$. This
  shows that the spanning assumption cannot be dropped in
  Proposition~\ref{prop:pEG}.
\end{ex}

Proposition~\ref{prop:pEG} has an immediate combinatorial consequence. Let us
recall that two polytopes in $\R^d$ are \emph{affinely equivalent} if they are
mapped onto each other by an affine-linear automorphism of $\R^d$. Moreover, we
say that two affinely equivalent lattice polytopes in $\R^d$ are
\emph{unimodularly equivalent} if such an affine-linear automorphism maps $\Z^d$
to $\Z^d$. In fixed dimension there are only finitely many lattice polytopes of
bounded volume up to unimodular equivalence (see \cite{LZ91}). Batyrev showed
more generally that there are only finitely many lattice polytopes (of arbitrary
dimension) of given degree and of bounded volume up to unimodular equivalence
and lattice pyramid constructions (see \cite{Bat06}). Here, $P \subseteq \R^d$
is a {\em lattice pyramid} if $P$ is unimodularly equivalent to $\conv \rleft(
\{ \vect{0} \}, \{ 1 \} \times P' \rright)$ for some lattice polytope $P'
\subseteq \R^{d-1}$. We recall that $h^*$-vectors of lattice polytopes are
invariant under lattice pyramid constructions (see, for instance,
\cite[Theorem~2.4]{BR:Computing}).

There exist (non-spanning) lattice polytopes of normalized volume $2$ for each
degree, none of them being a lattice pyramid of the other (see
\cite{Hibi11,HT:LowerBounds}). Such a situation cannot happen for spanning
lattice polytopes, since by Equation~\eqref{eq:vol2} a bound on the normalized
volume also implies a bound on the degree.

\begin{cor} 
  \label{peg-cons}
  There are only finitely many spanning lattice polytopes of given normalized
  volume (and arbitrary dimension) up to unimodular equivalence and lattice
  pyramid constructions.
\end{cor}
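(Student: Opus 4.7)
The plan is to combine Proposition~\ref{prop:pEG} with Batyrev's finiteness theorem from \cite{Bat06}. The latter asserts that, for any fixed degree, there are only finitely many lattice polytopes of bounded normalized volume up to unimodular equivalence and lattice pyramid constructions; the former will let me control the degree of a spanning polytope by its normalized volume, thereby reducing a volume bound to a simultaneous bound on degree \emph{and} volume.

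Fix $V \in \Z_{>0}$ and let $P$ be any spanning lattice polytope with $\Vol_\Z(P) = V$. By Equation~\eqref{eq:vol2} together with $h^*_1(P) \ge 0$ I obtain $\deg(P) \le V$. Hence every degree occurring among spanning lattice polytopes of normalized volume $V$ lies in the finite set $\{0, 1, \ldots, V\}$.

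For each such $s$, Batyrev's theorem supplies only finitely many lattice polytopes of degree $s$ and normalized volume $V$, up to unimodular equivalence and lattice pyramid constructions. Taking the union over the finitely many admissible values of $s$ produces a finite list of representatives containing every spanning lattice polytope of normalized volume $V$ up to this equivalence. To tidy up, I would note that the spanning property is preserved both under unimodular equivalence (immediate) and under the lattice pyramid construction: if $P'$ is spanning, then the apex $\vect{0}$ together with the points $\{1\} \times (P' \cap \Z^{d-1})$ affinely spans $\Z^d$. Restricting the finite list to its spanning members completes the proof. There is no real obstacle here; the corollary is essentially an immediate combination of Proposition~\ref{prop:pEG} and Batyrev's theorem, the whole point being that Proposition~\ref{prop:pEG} removes the degree hypothesis from Batyrev's statement in the spanning setting.
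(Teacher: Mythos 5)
Your proof is correct and follows the same route as the paper: the text immediately preceding the corollary derives the degree bound from Equation~\eqref{eq:vol2} (i.e.\ from Proposition~\ref{prop:pEG}) and then invokes Batyrev's finiteness theorem for fixed degree and bounded volume. Your extra remarks on the compatibility of the spanning property with pyramids are harmless but not needed for the finiteness claim.
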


\begin{rem}
  While the generalization in \cite{Nil08} of Batyrev's result might suggest
  this, we remark that it is not enough to fix $h^*_1$ and the degree of a
  spanning lattice polytope in order to bound its volume. To see this, we
  consider the three-dimensional lattice polytope $P$ with vertices
  \begin{align*}
    \vect{0}, \vect{e}_1, \vect{e}_2, -\vect{e}_3,
    \vect{e}_1 + \vect{e}_2 + a \vect{e}_3
  \end{align*}
  with $a \in \Z_{\ge 2}$. Then $P$ is spanning of (normalized) volume $a+1$
  where the only lattice points in $P$ are its vertices, so, $h^*_1 = 1$ and $s
  = 2$.
\end{rem}

\section{Application 2: On the vanishing of \texorpdfstring{$h^*$}{h*}-coefficients}
\label{sec:appl1}

\subsection{Passing to spanning lattice polytopes}

Let $P \subseteq \R^d$ be a $d$-dimensional lattice polytope (with respect to
$\Z^d$). Let us denote by $\Gamma_P$ the affine sublattice in $\Z^d$ generated
by $P \cap \Z^d$, \ie the set of all integral affine combinations of $P \cap
\Z^{d}$. We define the {\em spanning polytope $\tilde{P}$ associated to} $P$ as
the lattice polytope given by the vertices of $P$ with respect to the lattice
$\Gamma_P$.

Let us say that two lattice polytopes $P$, $P'$ are \emph{lattice-point
  equivalent} if there is an affine-linear automorphism of $\R^d$ mapping $P$ to
$P'$ such that the lattice points in $P$ map bijectively to the lattice points
in $P'$. In particular $P$ and $\tilde{P}$ are lattice-point
equivalent. Clearly, unimodularly equivalent implies lattice-point equivalent
implies affinely equivalent, however, none of the converses is generally
true. As $\Vol_\Z ( \tilde{P} ) \le \Vol_\Z(P)$, Corollary~\ref{peg-cons} has the following
Corollary~\ref{cor:vol-finite} as an immediate consequence for lattice polytopes
that are not necessarily spanning. For this, we call $P$ a \emph{lattice-point
  pyramid} if there is a facet of $P$ that contains all lattice points of $P$
except for one. Note that lattice pyramids are lattice-point pyramids, but not
vice versa.

\begin{cor}
  \label{cor:vol-finite}
  There are only finitely many lattice polytopes of given normalized volume (and
  arbitrary dimension) up to lattice-point equivalence and lattice-point pyramid
  constructions.
\end{cor}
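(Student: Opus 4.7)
The plan is to reduce to Corollary~\ref{peg-cons} by passing to the associated spanning polytope, taking advantage of the fact that lattice-point equivalence and lattice-point pyramid constructions are coarser notions than unimodular equivalence and lattice pyramid constructions respectively.

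First, I would fix a positive integer $V$ and consider the class of all lattice polytopes $P$ (of arbitrary dimension) with $\Vol_\Z(P) = V$. For each such $P$, I would associate the spanning polytope $\tilde{P}$ (with respect to the affine lattice $\Gamma_P$) as defined at the start of the section. The key observations I would collect are: (i) as noted in the excerpt, $\Vol_\Z(\tilde{P}) \le \Vol_\Z(P) = V$, because $\Gamma_P$ is a sublattice of $\Z^d$ and so its fundamental domain has larger Euclidean volume; (ii) $P$ and $\tilde{P}$ are lattice-point equivalent, which is already stated in the excerpt and follows by applying any unimodular isomorphism $\Gamma_P \to \Z^d$ and noting that, since $\Gamma_P$ contains $P \cap \Z^d$, such a map sends lattice points of $P$ bijectively onto lattice points of $\tilde P$.

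Next I would apply Corollary~\ref{peg-cons} to each normalized volume $V' \in \{1, 2, \ldots, V\}$, obtaining that the union over these $V'$ of spanning lattice polytopes of normalized volume $V'$ is finite up to unimodular equivalence and lattice pyramid constructions. Since $\tilde{P}$ is spanning and has normalized volume in that range, $\tilde{P}$ falls into one of finitely many equivalence classes under that coarser relation.

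Finally, I would observe that unimodular equivalence is a special case of lattice-point equivalence, and lattice pyramid constructions are a special case of lattice-point pyramid constructions (since the apex of a lattice pyramid is the unique lattice point not lying in the base facet). Combining with the fact that $P$ itself is lattice-point equivalent to $\tilde{P}$, it follows that $P$ lies in one of finitely many classes of lattice polytopes modulo lattice-point equivalence and lattice-point pyramid constructions. I do not anticipate a serious obstacle here; the main conceptual point — and the only place care is needed — is verifying that a lattice pyramid construction applied to $\tilde{P}$ corresponds to a lattice-point pyramid construction applied to $P$ via the lattice-point equivalence, which is immediate from the bijection on lattice points.
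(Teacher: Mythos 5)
Your proposal is correct and follows the paper's own route exactly: pass to the spanning polytope $\tilde{P}$, use $\Vol_\Z(\tilde{P}) \le \Vol_\Z(P)$ to invoke Corollary~\ref{peg-cons}, and transfer the finiteness back via the facts that $P$ and $\tilde{P}$ are lattice-point equivalent and that lattice pyramids are lattice-point pyramids. The paper states this as an immediate consequence without spelling out the details; your elaboration fills them in correctly.
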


We remark that this corollary can be also obtained from
\cite[Corollary~3.9]{Arnau}.

\subsection{Bounding the degree of the spanning lattice polytope}

As $h_1^*$ equals the number of lattice points minus dimension minus one, we get
$h^*_1(\tilde{P}) = h^*_1(P)$. For $i \ge 2$, it holds $h^*_i(\tilde{P}) \le
h^*_i(P)$. This follows from the description of $h^*_i$ as the number of lattice
points in half-open parallelepipeds, see Equation~\eqref{h-stern-equ} in
Section~\ref{subsec:hot}. In particular, $\deg(\tilde{P}) \le \deg(P)$.

The previous considerations show that Theorem~\ref{thm:nog} has the following
corollary.

\begin{cor}
  \label{main-cor}
  If $P$ is a lattice polytope with $h^*_i ( P ) = 0$, then $\deg(\tilde{P}) \le
  i-1$.
\end{cor}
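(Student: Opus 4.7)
The plan is to apply Theorem~\ref{thm:nog} to the associated spanning polytope $\tilde{P}$ and then transport the conclusion back to $P$ via the coefficient-wise comparison of the two $h^*$-vectors recalled at the start of this subsection.

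First I would observe that, by construction, $\tilde{P}$ is a spanning lattice polytope (with respect to the affine lattice $\Gamma_P$), so Theorem~\ref{thm:nog} applies: $h^*_j(\tilde{P}) \ge 1$ for every $j \in \{0, \ldots, \deg(\tilde{P})\}$. Next I would transfer the hypothesis $h^*_i(P) = 0$ over to $\tilde{P}$. Since $h^*_0 = 1$, we may assume $i \ge 1$. For $i \ge 2$, the inequality $h^*_i(\tilde{P}) \le h^*_i(P) = 0$ noted above forces $h^*_i(\tilde{P}) = 0$, while for $i = 1$ the equality $h^*_1(\tilde{P}) = h^*_1(P) = 0$ gives the same conclusion. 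Combining the two observations, $i$ cannot lie in $\{0, \ldots, \deg(\tilde{P})\}$, so $\deg(\tilde{P}) < i$, that is, $\deg(\tilde{P}) \le i - 1$.

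The main obstacle here is essentially absent: once Theorem~\ref{thm:nog} and the coefficient-wise comparison of $h^*$-vectors are granted, the corollary becomes a one-line contrapositive. The only point that genuinely needs checking is that the coefficient comparison covers every relevant index, which it does provided one separates the case $i = 1$, where one has the equality $h^*_1(\tilde{P}) = h^*_1(P)$ rather than the mere inequality valid for $i \ge 2$.
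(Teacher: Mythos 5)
Your argument is correct and is precisely the one the paper uses: the comparison $h^*_1(\tilde{P}) = h^*_1(P)$ and $h^*_i(\tilde{P}) \le h^*_i(P)$ for $i \ge 2$ forces $h^*_i(\tilde{P}) = 0$, and since $\tilde{P}$ is spanning, Theorem~\ref{thm:nog} excludes $i$ from $\{0, \ldots, \deg(\tilde{P})\}$. Nothing is missing.
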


In other words, the first zero in the $h^*$-vector of $P$ bounds the degree of
its spanning polytope.

\begin{rem}
  \label{heins}
  For $i = 1$, Corollary~\ref{main-cor} is even an equivalence. We give an
  elementary proof. Recall that a lattice polytope is an
  \emph{empty} lattice simplex if $|P \cap \Z^d| = d+1$, equivalently, $h^*_1 (
  P ) = 0$. Moreover, a lattice polytope $P$ is a \emph{unimodular simplex} if
  its vertices form an affine lattice basis. Equivalently, $\Vol_\Z( P ) = 1$,
  respectively, $\deg( P ) = 0$. We observe that a spanning lattice polytope is
  an empty simplex if and only if it is a unimodular simplex. In particular,
  $h^*_1(P)=0$ is equivalent to $\deg( \tilde{P} ) = 0$.
\end{rem}

For each $i \ge 2$, there exist empty lattice simplices $P$ with $h^*_i = 1$
(see \cite{Hibi11,HT:LowerBounds}). Hence, the converse of
Corollary~\ref{main-cor} fails for $i \ge 2$.

\subsection{The vanishing criterion by Blekherman, Smith, and Velasco}

While Corollary~\ref{main-cor} describes a necessary condition on the vanishing
of $h^*_i$, it is a natural question how to strengthen it to get an equivalence
also for $i \ge 2$. Recently such a criterion was proven for $i=2$ (see
\cite{Blek16}). In order to describe this result, let us denote a lattice
polytope $P \subseteq \R^d$ as \emph{$i$-IDP} if any lattice point $\vect{m} \in
(i P) \cap \Z^d$ can be written as $\vect{m} = \vect{m}_1 + \ldots + \vect{m}_i$
for $\vect{m}_1, \ldots, \vect{m}_i \in P \cap \Z^d$.

\begin{prop}[{\cite[Proposition~6.6]{Blek16}}]
  \label{blekherman}
  A lattice polytope $P$ satisfies $h^*_2(P)=0$ if and only if $\deg(\tilde{P})
  \le 1$ and $P$ is $2$-IDP.
\end{prop}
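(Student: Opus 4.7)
The plan is to prove both implications by comparing $P$ with the associated spanning polytope $\tilde{P}$, using the elementary Ehrhart identity
\begin{equation*}
  h^*_2(Q) \;=\; \ehr_Q(2) - (d+1)\,\ehr_Q(1) + \binom{d+1}{2}
\end{equation*}
(valid for any $d$-dimensional lattice polytope $Q$, by reading off the $t^2$-coefficient of $h^*_Q(t) = (1-t)^{d+1}\sum_k \ehr_Q(k)\, t^k$) together with Stanley's half-open box decomposition in the cone over $\tilde{P}$.

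For the direction $(\Leftarrow)$, suppose $\deg(\tilde{P}) \le 1$ and $P$ is $2$-IDP. The degree bound gives $h^*_2(\tilde{P}) = 0$. The $2$-IDP hypothesis says every lattice point of $2P$ is a sum of two elements of $P \cap \Z^d$, hence lies in the affine sublattice $\Gamma_P + \Gamma_P \subseteq \Z^d$, which is precisely the natural lattice for $2\tilde{P}$. This forces $\ehr_P(2) = \ehr_{\tilde{P}}(2)$; combined with the trivial equalities $\ehr_P(k) = \ehr_{\tilde{P}}(k)$ for $k = 0, 1$, the displayed identity yields $h^*_2(P) = h^*_2(\tilde{P}) = 0$.

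For the direction $(\Rightarrow)$, suppose $h^*_2(P) = 0$. Corollary~\ref{main-cor} immediately gives $\deg(\tilde{P}) \le 1$, hence $h^*_j(\tilde{P}) = 0$ for all $j \ge 2$. Running the identity in reverse forces $\ehr_P(2) = \ehr_{\tilde{P}}(2)$, i.e.\ every lattice point of $2P$ lies in $\Gamma_P + \Gamma_P$, equivalently is an integer combination of $P \cap \Z^d$ with coefficient sum $2$. To upgrade such a combination to a \emph{non-negative} one, I fix any lattice triangulation $\Tau$ of $\tilde{P}$ and apply the half-open box decomposition of $\cone(\{1\} \times \tilde{P})$: every lattice point $q$ of the intrinsic lattice at height $2$ decomposes uniquely as a non-negative integer combination of the lifted vertices $(1, w_i)$ of some $\sigma \in \Tau$ plus a half-open box point $b$ at height $h(b) \in \{0, 1, 2\}$. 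Since $h^*_j(\tilde{P}) = 0$ for $j \ge 2$, the case $h(b) = 2$ is excluded, so either $b = 0$ and $q$ projects to a sum of two vertices of $\sigma$, or $h(b) = 1$ and the projection of $b$ is a lattice point of $\sigma \cap \Gamma_P = \sigma \cap \Z^d \subseteq P \cap \Z^d$, making $q$ project to a vertex of $\sigma$ plus this lattice point. Either way, $q$ is a sum of two elements of $P \cap \Z^d$, so $P$ is $2$-IDP.

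The main obstacle is the bookkeeping between the ambient lattice $\Z^d$ and the affine sublattice $\Gamma_P$ through the half-open decomposition, in particular recognizing a height-$1$ box point of some $\sigma \in \Tau$ as a genuine element of $P \cap \Z^d$. This reduces to the identity $P \cap \Gamma_P = P \cap \Z^d$, immediate from the definition of $\Gamma_P$ as the affine lattice generated by $P \cap \Z^d$. The framework for half-open decompositions recalled in Section~4 of the paper provides the precise vocabulary for executing this argument.
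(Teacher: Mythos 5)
Your proposal is correct, and it is more self-contained than the paper's own treatment. The paper essentially outsources this proposition: it cites \cite[Proposition~6.6]{Blek16}, observes that the hard implication $h^*_2(P)=0\Rightarrow\deg(\tilde P)\le 1$ now follows from Corollary~\ref{main-cor} (exactly as you do), and covers the converse by the contrapositive argument of Proposition~\ref{prop:hi-vanish} (if $h^*_i(P)>0$, a height-$i$ box point exists, and $i$-IDP forces it into $\Gamma_P$, so $h^*_i(\tilde P)>0$). Your sufficiency argument via the coefficient identity $h^*_2(Q)=\ehr_Q(2)-(d+1)\ehr_Q(1)+\binom{d+1}{2}$ is an equivalent repackaging of that contraposition, trading half-open boxes for Ehrhart counting; both rest on the same comparison $\ehr_{\tilde P}(k)\le\ehr_P(k)$ with equality at $k\le 1$. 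Where you genuinely add something is the remaining forward implication $h^*_2(P)=0\Rightarrow P$ is $2$-IDP, which the paper leaves entirely to the citation: your observation that $h^*_2(P)=h^*_2(\tilde P)=0$ forces $\ehr_P(2)=\ehr_{\tilde P}(2)$, so that every point of $2P\cap\Z^d$ lives at height $2$ in $\Gamma_P$ and can then be split by the half-open decomposition of the cone over $\tilde P$ (with box points only at heights $0$ and $1$, the height-$0$ one being the apex and the height-$1$ ones lying in $P\cap\Gamma_P=P\cap\Z^d$), is a clean combinatorial substitute for the algebro-geometric input of \cite{Blek16}. The only points worth making explicit are that the unique decomposition you invoke is Propositions~\ref{prop:hoc-tile-c} and~\ref{prop:hop-tile-hoc} applied to the lattice $\Gamma_P$, and that only $h^*_2(\tilde P)=0$ (not all $h^*_j$, $j\ge 2$) is needed to exclude a height-$2$ box point; neither affects correctness.
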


This is a reformulation of \cite[Proposition~6.6]{Blek16} in our notation. The
hard non-combinatorial part of their proof that relies on results from real and
complex algebraic geometry is the statement $h^*_2(P)=0$ implies $\deg(\tilde P)
\le 1$. This follows now from Corollary~\ref{main-cor} for $i=2$. The authors of
\cite{Blek16} communicated to us another purely combinatorial proof that relies
on the classification of lattice polytopes of degree one (see \cite{BN07}). We
remark that such a classification is not known for lattice polytopes of higher
degree.

\smallskip

The sufficient condition on the vanishing of $h^*_2(P)$ in
Proposition~\ref{blekherman} easily generalizes.

\begin{prop}
  \label{prop:hi-vanish}
  If $\deg(\tilde{P}) \le i-1$ and $P$ is $i$-IDP, then $h^*_i(P)=0$.
\end{prop}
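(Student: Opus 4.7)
The plan is to reduce the claim to the vanishing of $h^*_i(\tilde P)$, which is immediate from the degree hypothesis. The bridge is the cone construction: let $C \coloneqq \cone(\{1\}\times P) \subseteq \R^{d+1}$, and let $M \subseteq \Z^{d+1}$ be the sublattice generated by $\{(1,v) \with v \in P \cap \Z^d\}$. Then $\ehr_P(k)$ counts lattice points of $\Z^{d+1} \cap C$ at height $k$, while $\ehr_{\tilde P}(k)$ counts lattice points of $M \cap C$ at height $k$.

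The key step is to show that $i$-IDP forces $\ehr_P(k) = \ehr_{\tilde P}(k)$ for every $0 \le k \le i$; equivalently, every height-$k$ lattice point of $C$ lies already in $M$. Fix any $v_0 \in P \cap \Z^d$ and let $m \in kP \cap \Z^d$. Since $m \in kP$ and $(i-k)v_0 \in (i-k)P$, the sum $m + (i-k)v_0$ belongs to $iP \cap \Z^d$. By $i$-IDP we may write $m + (i-k)v_0 = m_1 + \cdots + m_i$ with $m_j \in P \cap \Z^d$, and lifting to the cone gives
\begin{align*}
(k,m) \;=\; \sum_{j=1}^i (1,m_j) \;-\; (i-k)\,(1,v_0) \;\in\; M,
\end{align*}
as desired.

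Next, the $h^*$-coefficients and the Ehrhart values are linked by the triangular relation $\ehr_P(k) = \sum_{j=0}^k h^*_j(P)\binom{d+k-j}{d}$ (and the analogous one for $\tilde P$), which is invertible in the $h^*_j$'s. Hence $\ehr_P(k) = \ehr_{\tilde P}(k)$ for $0 \le k \le i$ propagates to $h^*_j(P) = h^*_j(\tilde P)$ for $0 \le j \le \min(i,d)$. If $i > d$ then $h^*_i(P) = 0$ automatically, since the $h^*$-polynomial has degree at most $d$; otherwise the hypothesis $\deg(\tilde P) \le i-1$ gives $h^*_i(\tilde P) = 0$, and thus $h^*_i(P) = 0$.

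The only non-routine ingredient is the padding trick $m \mapsto m + (i-k)v_0$ in the second paragraph, and even that is quite transparent; the rest of the argument is bookkeeping with standard Ehrhart-theoretic identities. I do not foresee any real obstacle.
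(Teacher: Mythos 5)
Your proof is correct, but it takes a genuinely different route from the paper's. The paper argues by contraposition using the half-open triangulation machinery of Section~\ref{sec:ehrhart}: if $h^*_i(P)>0$, then by Equation~\eqref{h-stern-equ} some lattice point $(i,m)$ with $m\in iP\cap\Z^d$ lies in a half-open parallelepiped and has $h^*$-value $i$; the $i$-IDP decomposition $m=m_1+\cdots+m_i$ immediately places $(i,m)=\sum_j(1,m_j)$ in $\Gamma_P$, so the same point witnesses $h^*_i(\tilde P)>0$ and hence $\deg(\tilde P)\ge i$. No padding is needed there, because the one relevant point already sits at height exactly $i$. Your argument, by contrast, never touches triangulations: the padding trick $m\mapsto m+(i-k)v_0$ upgrades $i$-IDP to the statement that \emph{every} cone point of height at most $i$ lies in $\Gamma_P$, so $\ehr_P(k)=\ehr_{\tilde P}(k)$ for $0\le k\le i$, and inverting the triangular binomial transform (valid since $\tilde P$ is again $d$-dimensional over $\Gamma_P$) transfers this to the $h^*$-coefficients. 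This is more elementary and self-contained, and it actually yields more: $h^*_j(P)=h^*_j(\tilde P)$ for all $j\le\min(i,d)$ whenever $P$ is $i$-IDP, refining the inequality $h^*_j(\tilde P)\le h^*_j(P)$ recorded in Section~\ref{sec:appl1}. The paper's proof is shorter only because the half-open apparatus is already set up for the main theorem.
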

\begin{proof}
  We show the contraposition, so assume $h^*_i(P) > 0$. Then there exists a
  lattice point of height $i$ in some half-open parallelepiped of a given
  half-open triangulation of $P$, we refer to Section~\ref{subsec:hot} for more
  details. As $P$ is $i$-IDP, the lattice point is also contained in the
  sublattice $\Gamma_P$, hence, $h^*_i(\tilde{P}) > 0$, and thus $\deg(\tilde P)
  > i-1$.
\end{proof}

\begin{rem}
  For $i \ge 3$ it is not true that $h^*_i(P)=0$ implies that $P$ is
  $i$-IDP. There exists a spanning (even very ample) lattice polytope $P'
  \subseteq \R^3$ with $h^*$-vector $h^*(P')=(1,4,5,0)$ such that a lattice
  point in $2P'$ is not a sum of two lattice points in $P'$ (see
  \cite{Bru13,Poly16}). This lattice polytope can be constructed as the
  Minkowski sum of the Reeve-simplex $R_4 \coloneqq \conv( \vect{0}, \vect{e}_1,
  \vect{e}_2, \vect{e}_1 + \vect{e}_2 + 4 \vect{e}_3 ) \subseteq \R^3$ and the
  edge $\conv( \vect{0}, \vect{e}_3 ) \subseteq \R^3$ (see
  \cite{Ogata}). Therefore, the lattice pyramid $P \subseteq \R^4$ over $P'$ is
  a $4$-dimensional spanning lattice polytope of degree $2$ that is not $3$-IDP,
  as the lattice point $2 \vect{e}_0 + \vect{e}_1 + \vect{e}_2 + 3 \vect{e}_3
  \in 3 P \cap \Z^4$ cannot be written as the sum of three lattice points in
  $P$.
\end{rem}

\subsection{Generalizing results on the degree of lattice polytopes}

In \cite{BN07,Nil08,HNP09} it was investigated how lattice polytopes of small
degree can be decomposed into lower-dimensional lattice polytopes. This question
is partly motivated by applications in algebraic geometry
\cite{Dickenstein-Sandra, Dickenstein-Nill, Adjunction-Paper,
  Ito15}. Corollary~\ref{main-cor} allows to generalize these results up to a
change of lattice.  For this, let us recall that $P \subseteq \R^d$ is called a
\emph{Cayley polytope} of lattice polytopes $P_1, \ldots, P_k \subseteq \R^m$ if
$k \ge 2$ and $P$ is unimodularly equivalent to $\conv(P_1 \times \vect{e}_1,
\ldots, P_k \times \vect{e}_k) \subseteq \R^m \times \R^k$ where $\vect{e}_1,
\ldots, \vect{e}_k$ denotes the standard basis of $\Z^k$. In particular, note
that the lattice points of a Cayley polytope lie on two parallel affine
hyperplanes of lattice distance one.

\begin{cor}
  \label{cor-cayley}
  Let $P$ be a $d$-dimensional lattice polytope with $h^*_{i+1} = 0$. If $d >
  \frac{i^2+19i-4}{2} \eqqcolon d'$, then $\tilde{P}$ is a Cayley polytope of
  lattice polytopes in dimension at most $d'$. In this case, every lattice point
  in $P$ lies on one of two parallel hyperplanes.
\end{cor}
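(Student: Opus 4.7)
The plan is to reduce to the spanning polytope and then invoke the Cayley decomposition theorem for lattice polytopes of small degree from \cite{HNP09}.

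First, I would apply Corollary~\ref{main-cor}: the assumption $h^*_{i+1}(P)=0$ yields $\deg(\tilde{P}) \le i$. Passing from $P$ to $\tilde{P}$ changes only the lattice, not the dimension, so $\dim(\tilde{P}) = d$; moreover $P$ and $\tilde{P}$ are lattice-point equivalent, so an affine-linear isomorphism carries the lattice points of $P$ bijectively to those of $\tilde{P}$. Hence it suffices to prove the Cayley-decomposition statement for the spanning polytope $\tilde P$ and then pull back the conclusion about lattice points.

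Next, I would invoke the main theorem of Haase--Nill--Payne \cite{HNP09}: a lattice polytope $Q$ of degree $s$ and dimension $\dim Q > (s^2 + 19s - 4)/2$ is a Cayley polytope of lattice polytopes of dimension at most $(s^2 + 19s - 4)/2$. Setting $s \coloneqq \deg(\tilde{P}) \le i$, the quantity $(s^2+19s-4)/2$ is non-decreasing in $s$ for $s\ge 0$, so the hypothesis $d > d' = (i^2+19i-4)/2$ guarantees $\dim(\tilde{P}) > (s^2+19s-4)/2$. The theorem therefore applies to $\tilde{P}$ and exhibits it as $\conv(P_1 \times \vect{e}_1, \ldots, P_k \times \vect{e}_k)$ with $k \ge 2$ and each $P_j$ of dimension at most $d'$.

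Finally, for the statement on lattice points, I would note that if $(\vect{x}, \vect{y}) \in \conv(P_1 \times \vect{e}_1, \ldots, P_k \times \vect{e}_k) \cap \Z^{m+k}$, then $\vect{y}$ is an integer point of the unit simplex $\conv(\vect{e}_1,\ldots,\vect{e}_k)$, forcing $\vect{y} = \vect{e}_j$ for some $j$. Thus every lattice point lies in some $P_j \times \vect{e}_j$. Because $k \ge 2$, the two parallel hyperplanes $\{y_{m+1}=1\}$ (containing $P_1 \times \vect{e}_1$) and $\{y_{m+1}=0\}$ (containing $P_j \times \vect{e}_j$ for $j \neq 1$) together cover all lattice points. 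Transporting this by the lattice-point equivalence to $P$, every lattice point of $P$ lies on one of two parallel affine hyperplanes.

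The main step is locating and correctly applying the HNP09 bound; the numerical coincidence of $d'$ with $(i^2+19i-4)/2$ makes this routine, and everything else is bookkeeping. No step looks like a genuine obstacle given Corollary~\ref{main-cor} and \cite{HNP09}.
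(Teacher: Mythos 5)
Your proposal is correct and follows the paper's proof exactly: apply Corollary~\ref{main-cor} to get $\deg(\tilde{P}) \le i$ and then invoke \cite[Theorem~1.2]{HNP09} for $\tilde{P}$. The extra details you supply (monotonicity of $(s^2+19s-4)/2$ in $s$, the two-hyperplane observation for Cayley polytopes, and the transport back to $P$ via lattice-point equivalence) are all sound and are left implicit in the paper.
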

\begin{proof}
  By, Corollary~\ref{main-cor}, $\deg(\tilde{P}) \le i$. Now, we apply
  \cite[Theorem~1.2]{HNP09} to $\tilde{P}$.
\end{proof}

\begin{rem}
  Let us briefly discuss the relation of the results of this section to the
  study of point configurations of small \emph{combinatorial degree}, \ie the
  maximal degree of the $h$-vector of lattice triangulations of $P$. We refer to
  \cite{Arnau} for terminology and background. Let us observe that the
  $h$-vector of a lattice triangulation $\Tau$ of $P$ has no internal
  zeros. This can be deduced from the fact that the $h$-vector is an
  $M$-sequence (see \cite{Bruns}); an alternative, direct proof can also be
  given using Lemma~\ref{lem:simpl-compl}. Now, it follows from the
  Betke--McMullen formula (see \cite{Betke}) that $h^*_{i+1} (P) = 0$ implies
  $h_{i+1}( \Tau ) = 0$. Hence, the combinatorial degree of $P$ is bounded by
  $i$ if $h^*_i = 0$. This shows that Corollary~\ref{cor-cayley} sharpens in
  this case the conclusion in \cite[Theorem~A]{Arnau} which only guaranteed a
  so-called \enquote{weak Cayley} condition.
\end{rem}

\section{Ehrhart Theory and Half-open Triangulations}
\label{sec:ehrhart}

\subsection{Half-open triangulations}
\label{subsec:hot}
In this subsection let $P \subseteq \R^d$ be a $d$-dimensional lattice
polytope. The polynomial $h^*_P$ can be computed by means of the \emph{cone $C$
  over $P$}, \ie $C = \cone \rleft( \{ 1 \} \times P \rright) \subseteq
\R^{d+1}$, equipped with a triangulation which we now outline. For details and
references on Ehrhart Theory, we refer to \cite{BR:Computing}. Our approach is
in the spirit of \cite{KV:Computing} (see also \cite{HNP:LattPoly}).

In this paper, by a \emph{triangulation} $\Tau$ of $C$, we mean a regular
triangulation of $C$ such that the primitive ray generators of every face of the
triangulation are contained in the affine hyperplane $\{1\} \times \R^d$. The
set of faces of dimension $k$ we denote by $\Tau^{(k)}$.

A point $\xi \in \R^{d+1}$ is called \emph{generic} with respect to a
triangulation $\Tau$ of $C$, if it is not contained in any of the linear
subspaces generated by the faces in $\Tau^{(d)}$.

We define
\begin{align*}
  \Upsilon_C & \coloneqq \{ \Tau \; \text{triangulation of} \; C \}
               \text{,} \\
  \Xi_C & \coloneqq \{ \xi \in C \; \text{generic with respect to
          any} \; \Tau \in \Upsilon_C \} \text{.}
\end{align*}

The set of primitive generators in $\Z^{d+1}$ of the extremal rays of a
polyhedral cone $\sigma \subseteq \R^{d+1}$, we denote by $\sigma^{(1)}$.

\begin{defn}
  \label{def:hot}
  A \emph{half-open triangulation} of $C$ consists of a choice $\rleft( \Tau,
  \xi \rright) \in \Upsilon_C \times \Xi_C$. For every maximal cell $\sigma \in
  \Tau^{(d+1)}$ the corresponding \emph{half-open cell} $\sigma[ \xi )$ is given
  as follows: Write $\xi = \sum_{\vect{v} \in \sigma^{(1)}} \lambda_{\vect{v}}
  \vect{v}$ for $\lambda_{\vect{v}} \in \R \setminus \{ 0 \}$ and set $I_\xi
  \rleft( \sigma \rright) \coloneqq \set{ \vect{v} \in \sigma^{(1)} \with
    \lambda_{\vect{v}} < 0 }$. Then
  \begin{align*}
    \sigma \rleft[ \xi \rright) = \set{ \sum_{\vect{v} \in
    \sigma^{(1)}} \mu_{\vect{v}} \vect{v} \with \mu_{\vect{v}} \in
    \R_{\ge0}, \; \mu_{\vect{v}} > 0 \; \text{for all} \; \vect{v} \in
    I_\xi \rleft( \sigma \rright) } \text{.}
  \end{align*}
\end{defn}

The proofs of the following results in Section~\ref{subsec:hot} are standard and
can be done as in \cite{HNP:LattPoly}.

\begin{prop}
  \label{prop:hoc-tile-c}
  Let $\rleft( \Tau, \xi \rright) \in \Upsilon_C \times \Xi_C$ be a half-open
  triangulation of $C$. The half-open cells $\sigma \rleft[ \xi \rright)$ for
  $\sigma \in \Tau^{(d+1)}$ yield a partition of $C$, \ie we have a disjoint
  union
  \begin{align*}
    C = \bigcup_{\sigma \in \Tau^{(d+1)}} \sigma \rleft[ \xi \rright) \text{.}
  \end{align*}
\end{prop}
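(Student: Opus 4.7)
The plan is to characterize each half-open cell via perturbation: $x \in \sigma[\xi)$ if and only if $x + t\xi$ lies in the interior of $\sigma$ for all sufficiently small $t > 0$. Granted this equivalence, the partition is immediate, since for each $x \in C$ exactly one maximal cell can play the role of $\sigma$.

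To verify the characterization, I would fix a maximal cell $\sigma$ containing $x$ and write $x = \sum_{\vect{v} \in \sigma^{(1)}} \mu_{\vect{v}} \vect{v}$ together with $\xi = \sum_{\vect{v} \in \sigma^{(1)}} \lambda_{\vect{v}} \vect{v}$ in the unique simplicial coordinates provided by $\sigma$. Then $x + t\xi = \sum_{\vect{v}} (\mu_{\vect{v}} + t \lambda_{\vect{v}}) \vect{v}$ lies in the interior of $\sigma$ for small $t > 0$ precisely when $\mu_{\vect{v}} > 0$ for every $\vect{v}$ with $\lambda_{\vect{v}} \le 0$. Since genericity forces $\lambda_{\vect{v}} \neq 0$, this is exactly the condition $\lambda_{\vect{v}} < 0 \Rightarrow \mu_{\vect{v}} > 0$ defining $\sigma[\xi)$.

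To show the unique existence of such a $\sigma$ for every $x \in C$, I would argue as follows. The ray $\{x + t\xi : t > 0\}$ is contained in $C$ because $C$ is a cone containing both $x$ and $\xi$. Every face $F$ of $\Tau$ of dimension at most $d$ is contained in some face $F' \in \Tau^{(d)}$, whence by genericity $\xi \notin \lspan(F') \supseteq \lspan(F)$; the ray can therefore not remain inside any proper face for an interval of $t$-values. Consequently, for all sufficiently small $t > 0$, the point $x + t\xi$ lies in the interior of a unique maximal cell $\sigma_0 \in \Tau^{(d+1)}$, and taking $t \to 0^+$ gives $x \in \sigma_0$ by closedness of $\sigma_0$.

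The main obstacle will be this last geometric argument—confirming that the perturbation pushes $x$ into a $(d+1)$-dimensional cell rather than a lower-dimensional face. But this is precisely what the genericity condition $\xi \notin \lspan(F)$ for $F \in \Tau^{(d)}$ was designed to ensure, since every lower-dimensional face of $\Tau$ lies inside the linear span of some $d$-face. The remaining bookkeeping is routine and, as the authors note, follows the pattern of the analogous arguments in \cite{HNP:LattPoly}.
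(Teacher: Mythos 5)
Your proof is correct and is essentially the standard perturbation argument (characterizing $\sigma[\xi)$ as the set of points pushed into $\sigma^\circ$ by $\xi$, then using genericity so that the ray $x+t\xi$ leaves the $d$-skeleton for all small $t>0$) that the paper itself does not spell out but delegates to \cite{HNP:LattPoly}. The only step worth making explicit is that the segment $\{x+t\xi : 0<t<\epsilon\}$ is connected and hence lies in the interior of a \emph{single} maximal cell, which gives uniqueness.
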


\begin{defn}
  \label{def:hop}
  Let $\rleft( \Tau, \xi \rright) \in \Upsilon_C \times \Xi_C$ be a half-open
  triangulation of $C$. The \emph{half-open fundamental parallelepiped}
  $\Pi_\sigma \rleft[ \xi \rright)$ of a half-open cell $\sigma \rleft[ \xi
  \rright)$ for $\sigma \in \Tau^{(d+1)}$ is given by
  \begin{align*}
    \Pi_\sigma \rleft[ \xi \rright) = \set{ \sum_{\vect{v} \in
    \sigma^{(1)}} \lambda_{\vect{v}} \vect{v} \with \lambda_{\vect{v}}
    \in [0,1[ \; \text{for} \; \vect{v} \not \in I_\xi ( \sigma ),
    \lambda_{\vect{v}} \in ]0,1] \; \text{for} \; \vect{v} \in I_\xi (
    \sigma ) } \text{.}
  \end{align*}
\end{defn}

\begin{rem}
  Let $\rleft( \Tau, \xi \rright) \in \Upsilon_C \times \Xi_C$ be a half-open
  triangulation of $C$ and take $\sigma \in \Tau^{(d+1)}$. If $\xi \in
  \sigma^\circ$, then note that $\Pi_\sigma \rleft[ \xi \rright)$ is the usual
  half-open parallelepiped, \ie
  \begin{align*}
    \Pi \rleft[ \xi \rright) = \set{ \sum_{\vect{v} \in \sigma^{(1)}} \lambda_{\vect{v}}
    \vect{v} \with \lambda_{\vect{v}} \in [ 0, 1 [ } \text{.}
  \end{align*}
\end{rem}

\begin{prop}
  \label{prop:hop-tile-hoc}
  Let $\rleft( \Tau, \xi \rright) \in \Upsilon_C \times \Xi_C$ be a half-open
  triangulation and fix a half-open cell $\sigma \rleft[ \xi \rright)$ for
  $\sigma \in \Tau^{(d+1)}$. The translates of $\Pi \rleft[ \xi \rright)$ by
  vectors in $\mathcal{M} \coloneqq \sum_{\vect{w} \in \sigma^{(1)}} \Z_{\ge0}
  \vect{w}$ yield a partition of the half-open cell $\sigma \rleft[ \xi
  \rright)$, \ie we have a disjoint union
  \begin{align*}
    \sigma \rleft[ \xi \rright) = \bigcup_{\vect{v} \in \mathcal{M}} \vect{v} +
    \Pi \rleft[ \xi \rright) \text{.}
  \end{align*}
\end{prop}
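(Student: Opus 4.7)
\emph{Proof proposal.} Since $\Tau$ is a triangulation and $\sigma \in \Tau^{(d+1)}$ is maximal, $\sigma$ is a simplicial cone and its primitive ray generators $\sigma^{(1)}$ form a basis of $\R^{d+1}$. Hence any $\vect{x} \in \sigma\rleft[\xi\rright)$ has a unique expression $\vect{x} = \sum_{\vect{v} \in \sigma^{(1)}} \mu_{\vect{v}} \vect{v}$ with $\mu_{\vect{v}} \ge 0$, and with $\mu_{\vect{v}} > 0$ whenever $\vect{v} \in I_\xi(\sigma)$. The plan is to show, ray by ray, that each coordinate $\mu_{\vect{v}}$ admits a unique decomposition $\mu_{\vect{v}} = n_{\vect{v}} + \lambda_{\vect{v}}$ with $n_{\vect{v}} \in \Z_{\ge 0}$ and $\lambda_{\vect{v}}$ in the half-open interval prescribed in Definition~\ref{def:hop}, namely $[0,1)$ if $\vect{v} \notin I_\xi(\sigma)$ and $(0,1]$ if $\vect{v} \in I_\xi(\sigma)$. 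Summing up then produces the unique pair $\vect{m} = \sum n_{\vect{v}} \vect{v} \in \mathcal{M}$ and $\vect{p} = \sum \lambda_{\vect{v}} \vect{v} \in \Pi_\sigma\rleft[\xi\rright)$ with $\vect{x} = \vect{m} + \vect{p}$.

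For $\vect{v} \notin I_\xi(\sigma)$, there is no choice: $n_{\vect{v}} \coloneqq \lfloor \mu_{\vect{v}} \rfloor \in \Z_{\ge 0}$ and $\lambda_{\vect{v}} \coloneqq \mu_{\vect{v}} - n_{\vect{v}} \in [0,1)$. For $\vect{v} \in I_\xi(\sigma)$, where $\mu_{\vect{v}} > 0$, the constraint $\lambda_{\vect{v}} \in (0,1]$ forces $n_{\vect{v}} = \lceil \mu_{\vect{v}} \rceil - 1 \in \Z_{\ge 0}$ (nonnegativity being guaranteed by $\mu_{\vect{v}} > 0$, hence $\lceil \mu_{\vect{v}} \rceil \ge 1$), and then $\lambda_{\vect{v}} = \mu_{\vect{v}} - n_{\vect{v}}$ automatically lies in $(0,1]$. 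In particular, the decomposition exists and is unique in both cases.

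Conversely, one checks that every such translate sits inside $\sigma\rleft[\xi\rright)$: for $\vect{m} = \sum n_{\vect{v}} \vect{v} \in \mathcal{M}$ and $\vect{p} = \sum \lambda_{\vect{v}} \vect{v} \in \Pi_\sigma\rleft[\xi\rright)$, the coefficient $\mu_{\vect{v}} \coloneqq n_{\vect{v}} + \lambda_{\vect{v}}$ of $\vect{m} + \vect{p}$ is nonnegative, and for $\vect{v} \in I_\xi(\sigma)$ it satisfies $\mu_{\vect{v}} \ge \lambda_{\vect{v}} > 0$. Combining this with the existence and uniqueness established coordinate-wise yields the claimed disjoint union.

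There is no real obstacle here beyond bookkeeping: the one subtlety is the case $\vect{v} \in I_\xi(\sigma)$, where compatibility between $n_{\vect{v}} \ge 0$ and $\lambda_{\vect{v}} \in (0,1]$ relies crucially on the strict positivity $\mu_{\vect{v}} > 0$ built into the definition of $\sigma\rleft[\xi\rright)$. Everything else reduces to the elementary fact that $[0,1)$ (respectively, $(0,1]$) is a fundamental domain for the $\Z$-action on $\R_{\ge 0}$ (respectively, $\R_{> 0}$) by integer translation.
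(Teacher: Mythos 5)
Your proof is correct, and it is exactly the standard coordinate-wise argument (unique basis representation plus the fact that $[0,1)$, respectively $(0,1]$, is a fundamental domain for integer translation on $\R_{\ge 0}$, respectively $\R_{>0}$) that the paper itself omits, deferring to \cite{HNP:LattPoly}. The one subtlety you flag --- that $n_{\vect{v}} \ge 0$ for $\vect{v} \in I_\xi(\sigma)$ hinges on the strict inequality $\mu_{\vect{v}} > 0$ in the definition of $\sigma\rleft[\xi\rright)$ --- is indeed the only point requiring care, and you handle it correctly.
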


\begin{defn}
  \label{def:hstar-fct}
  We define a map
  \begin{align*}
    h^* \colon \rleft( C \cap \Z^{d+1} \rright) \times \Upsilon_C
    \times \Xi_C \to \Z_{\ge0} \text{,}
  \end{align*}
  as follows: For given $\vect{v} \in \rleft( C \cap \Z^{d+1} \rright)$, $\Tau
  \in \Upsilon_C$ and $\xi \in \Xi_C$, there is exactly one $\sigma \in
  \Tau^{(d+1)}$ such that $\vect{v}$ is contained in the half-open cell $\sigma[
  \xi )$.  There is a unique $\ulFrac{\vect{v}}_{\Tau,\xi} \in \Pi_\sigma
  \rleft[ \xi \rright) \cap \Z^{d+1}$ such that $\vect{v} -
  \ulFrac{\vect{v}}_{\Tau,\xi} \in \sum_{\vect{w} \in \sigma^{(1)}} \Z_{\ge0}
  \vect{w}$. Then $h^*_{\Tau, \xi} ( \vect{v} )$ is, by definition, equal to the
  first coordinate of $\ulFrac{\vect{v}}_{\Tau,\xi}$.
\end{defn}

\begin{figure}[!ht]
  \centering
  \begin{tikzpicture}
    \draw (.1,0) -- (2.6,2.5);
    \draw (-.1,0) -- (-2.6,2.5);
    \draw (-.1,0) -- (-.1,2.5) node[left] {$\sigma_{1} \rleft[ \xi \rright)$};
    \node[above] at (0,2.5) {$C$};
    \draw[dashed] (.1,0) -- (.1,2.5) node[right] {$\sigma_{2}\rleft[ \xi \rright)$};
    \draw[thick] (-.1,0) -- (-1.1,1);
    \draw[thick] (-.1,0) -- (-.1,1);
    \draw[dashed] (-.1,1) -- ( -1.1,2);
    \draw[dashed] (-1.1,1) -- (-1.1,2);
    
    \draw[dashed] (.1,1) -- (1.1,2);
    \draw[thick] (.1,0) -- (1.1,1);
    \draw[thick] (1.1,1) -- (1.1,2);
    
    \fill (-1.7,2) circle (1pt) node[above] {$\xi$};
    
    \fill (1.6,3) circle (2pt) node[right] {$\vv$};
    
    \draw[-latex,very thick] (1.6,3) -- (1.6,2);
    \draw[-latex,very thick] (1.6,2) -- (.6,1) node[below right] {$\ulFrac{\vv}_{\Tau,\xi}$};
    
    \fill (-.1,0) circle (2pt);
    \fill[fill=white,draw=black] (.1,0) circle (2pt);
    
    \fill[fill=white,draw=black] (-1.1,1) circle (2pt);
    \fill[fill=white,draw=black] (-1.1,2) circle (2pt);
    \fill[fill=white,draw=black] (-.1,1) circle (2pt);
    \fill[fill=white,draw=black] (.1,1) circle (2pt);
    \fill (1.1,1) circle (2pt);
    \fill[fill=white,draw=black] (1.1,2) circle (2pt);
    
    \fill (-.6,1) circle (2pt);
    \fill (.6,1) circle (2pt);
  \end{tikzpicture}
  \caption{Illustration of Definition~\ref{def:hstar-fct} for $P = [-2,2]$
    (here, $h^{*}_{\Tau,\xi}(\vv) = 1$).}
  \label{fig:tulip}
\end{figure}

For fixed $\Tau \in \Upsilon_C$ and $\xi \in \Xi_C$, the \emph{$h^*$-polynomial}
of $P$ is given by
\begin{equation}
  \label{h-stern-equ}
  h^*_P ( t ) = \sum_{k=0}^s h_k^* t^k = \sum_{\sigma \in \Tau^{(d+1)}}
  \sum_{\vect{v} \in \Pi_\sigma[\xi) \cap \Z^{d+1}}
  t^{h^*_{\Tau, \xi}\rleft(\vect{v}\rright)} \text{.}
\end{equation}
From this equality it is evident that the coefficients $h^*_k$ are non-negative
integers. In particular, we observe that
\begin{equation}
  \label{eq:h-star-non-vanish}
  \set{ k = 0, \ldots, s \with h_k^* \neq 0 } = \set{h^*_{\Tau,\xi} ( \vect{v} )
    \with \vect{v} \in C \cap \Z^{d+1} } \text{.}
\end{equation}

\subsection{A general Ehrhart-theoretic result and the proof of Theorem
  \ref{thm:nog}}
\label{nog-label}
Theorem~\ref{thm:nog} is an immediate consequence of the following main result
of this paper.

\begin{thm}
  \label{thm:nog-vG}
  Let $P \subseteq \R^d$ be a $d$-dimensional lattice polytope and let $C
  \subseteq \R^{d+1}$ be the cone over it. Let $\Gamma_P$ be the sublattice of
  $\Z^{d+1}$ spanned by the lattice points in $\{1 \} \times P$. Then for every
  $\vect{v} \in \Z^{d+1}$ and all tuples $\rleft( \Tau_0, \xi_0 \rright) \in
  \Upsilon_C \times \Xi_C$, there exist nonnegative integers $a_{\vect{v}} \le
  b_{\vect{v}}$ (independent of the choice $\rleft( \Tau_0, \xi_0 \rright)$)
  such that
  \begin{align*}
    \rleft[ a_{\vect{v}}, b_{\vect{v}} \rright] \cap \Z  =
    h^*_{\Tau_0,\xi_0} \rleft( C \cap \rleft( \vect{v} +
    \Gamma_P \rright) \rright)\text{.}
  \end{align*}
\end{thm}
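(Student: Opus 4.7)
The plan is to prove Theorem~\ref{thm:nog-vG} in two stages: first, reformulate the set in question as the support of a polynomial intrinsic to the coset $L \coloneqq \vect{v} + \Gamma_P$, and then show this support is a contiguous integer interval.

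For the first stage, observe that every ray generator of every $\sigma \in \Tau_0^{(d+1)}$ lies in $\{1\} \times P$ and hence in $\Gamma_P$. Consequently, for any $\vect{w} \in C \cap \Z^{d+1}$ the reduction $\ulFrac{\vect{w}}_{\Tau_0,\xi_0}$ lies in the same $\Gamma_P$-coset as $\vect{w}$, and any lattice point already contained in some $\Pi_\sigma[\xi_0)$ is its own reduction. Hence $h^*_{\Tau_0,\xi_0}(C \cap L)$ coincides with the set of first coordinates of lattice points in $L$ lying in some $\Pi_\sigma[\xi_0)$, \ie with the support of
\begin{align*}
  h^*_L(t;\Tau_0,\xi_0) \coloneqq \sum_{\sigma \in \Tau_0^{(d+1)}} \sum_{\vect{p} \in \Pi_\sigma[\xi_0) \cap L} t^{h^*_{\Tau_0,\xi_0}(\vect{p})} \text{.}
\end{align*}

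Next, I would show this polynomial is independent of $(\Tau_0,\xi_0)$. By Proposition~\ref{prop:hop-tile-hoc} together with the observation that $M_\sigma \coloneqq \sum_{\vect{u} \in \sigma^{(1)}} \Z_{\ge 0} \vect{u} \subseteq \Gamma_P$, intersecting with $L$ gives a disjoint decomposition $\sigma[\xi_0) \cap L = \bigsqcup_{\vect{p} \in \Pi_\sigma[\xi_0) \cap L}(\vect{p} + M_\sigma)$. Since every ray generator has first coordinate $1$, summing $t^{w_0}$ over each such column yields a factor $1/(1-t)^{d+1}$; invoking Proposition~\ref{prop:hoc-tile-c} then gives
\begin{align*}
  \sum_{\vect{w} \in C \cap L} t^{w_0} = \frac{h^*_L(t;\Tau_0,\xi_0)}{(1-t)^{d+1}} \text{,}
\end{align*}
where $w_0$ denotes the first coordinate. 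The left-hand side depends only on $L$, so $h^*_L(t) \coloneqq h^*_L(t;\Tau_0,\xi_0)$ is intrinsic, and with it the set $h^*_{\Tau_0,\xi_0}(C \cap L) = \supp(h^*_L)$.

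For the second stage, I would prove that $\supp(h^*_L)$ is a contiguous integer interval. This is where the ``successive modifications of half-open triangulations'' and ``number-theoretic step functions'' from the abstract come into play. Given $h_1 < h_2$ in the support, the mechanism I would attempt is to perform a controlled sequence of elementary modifications of $(\Tau_0,\xi_0)$ (flips and moves of $\xi$ across walls), track which lattice points in $L$ enter or leave the various $\Pi_\sigma[\xi)$, and exploit that the associated step functions counting first coordinates of such transitions change by $\pm 1$ at each move, so every integer height in $[h_1, h_2]$ must be attained.

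The main obstacle will be this second stage. Even with the clean independence identity of the first stage in hand, the interval property is a genuine combinatorial statement about how half-open parallelepipeds interact with cosets of $\Gamma_P$. Finding the right family of modifications and the step-function bookkeeping that ensures no integer height is skipped is the substantive content, and is what the authors develop in Section~\ref{sec:bigproof}.
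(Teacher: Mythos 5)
Your first stage is correct and coincides with the paper's opening lemma: the generating-function identity
\begin{align*}
  \sum_{\vect{w} \in C \cap L} t^{w_0} = \frac{h^*_L(t;\Tau_0,\xi_0)}{(1-t)^{d+1}}
\end{align*}
does show that the set $h^*_{\Tau_0,\xi_0}(C \cap L)$ is the support of a polynomial intrinsic to the coset $L$, hence independent of $(\Tau_0,\xi_0)$. This is exactly how the paper disposes of the ``independent of the choice'' clause.

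The second stage, however, is where the entire content of the theorem lies, and your sketch of it contains a gap and, as stated, a false step. You propose to move $\xi$ across walls and flip $\Tau$, tracking lattice points of $L$ entering and leaving the parallelepipeds, ``and exploit that the associated step functions \ldots change by $\pm 1$ at each move.'' They do not. The paper's Example~\ref{ex:change-triang} exhibits a corank-one configuration and a single lattice point $\vect{x}$ for which the set of attainable values $h^*_{\Tau,\xi}(\vect{x})$ over \emph{all} $(\Tau,\xi)$ is $\{2,4\}$ --- already missing an intermediate integer --- and the associated step function in Example~\ref{ex:step-fct} has jumps $r(t_0)=3$. So no bookkeeping confined to a single point can close the gaps; the intermediate values are attained only by \emph{other} lattice points of the coset. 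The ideas you are missing are: (a) a reduction of the interval claim to comparing $h^*$-values of one point under different $(\Tau,\xi)$, achieved by walking from a minimizer $\vect{v}$ to a maximizer $\vect{w}=\vect{v}+\sum_i\vect{v}_i-\sum_j\vect{v}'_j$ and, at each step, choosing a pulling triangulation with $\vect{v}_k$ as last vertex so that $h^*_{\Tau',\xi'}(\vect{x}_{k-1}+\vect{v}_k)=h^*_{\Tau',\xi'}(\vect{x}_{k-1})$; (b) Proposition~\ref{prop:key-obs}, which shows that for \emph{fixed} $\Tau$ the values over all $\xi$ form an interval, via the simplicial-complex structure of the index sets $I_\xi(\sigma')$ (Lemma~\ref{lem:simpl-compl}); and (c) Proposition~\ref{prop:fliplemma}, which handles a single flip by reducing to a corank-one vector set and analyzing the step function $f(t)=\sum\lFrac{x_\vv-t\lambda_\vv}+\sum\lFrac{x_\vv+t\mu_\vv}$, where each jump of size $r(t)>1$ or $l(t)>1$ is bridged by explicitly constructed auxiliary points $\vect{y}\in C_{\Am}\cap(\vect{v}+\Gamma_{\Am})$ together with a further change of generic vector. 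Without these three ingredients the interval property is not established.
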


The proof of Theorem~\ref{thm:nog-vG} will be developed in
Section~\ref{sec:bigproof}. Let us show here how to use Theorem~\ref{thm:nog-vG}
to prove Theorem~\ref{thm:nog}.

\begin{proof}[Proof of Theorem~\ref{thm:nog}]
  As in the statement of Theorem~\ref{thm:nog-vG}, let $\Gamma_P$ be the
  sublattice spanned by the lattice points in $\{1\} \times P$. Since $P$ is
  spanning, we obtain $\Gamma_P = \Z^{d+1}$. The statement follows from Equation
  \eqref{eq:h-star-non-vanish} by applying Theorem~\ref{thm:nog-vG} with
  $\vect{v} \coloneqq \vect{0}$.
\end{proof}

\begin{rem}
  \label{rem:hstar-interval}
  One can also interpret Theorem~\ref{thm:nog-vG} as follows. We use the
  notation from that theorem. Assume that $h^*_b = h^*_B = 0$ for two integers
  $b < B$ such that $h^*_k \neq 0$ for all $k = b+1, \ldots, B-1$. Fix $\rleft(
  \Tau, \xi \rright) \in \Upsilon_C \times \Xi_C$ and take a vector $\vect{v}
  \in C \cap \Z^{d+1}$ with $b < h^*_{\Tau, \xi} \rleft( \vect{v} \rright) <
  B$. Let $\vect{v}_1, \ldots, \vect{v}_a, \vect{v}'_1, \ldots, \vect{v}'_A \in
  \rleft( \{1\} \times P \rright) \cap \Z^{d+1}$ such that $\vect{v}' \coloneqq
  \vect{v} + \sum_{i=1}^a \vect{v}_i - \sum_{j=1}^A \vect{v}'_j \in C$. Then $b
  < h^*_{\Tau,\xi} \rleft( \vect{v}' \rright) < B$, \ie the lattice points in
  $C$ that can be reached from $\vect{v}$ by adding or subtracting lattice
  points from $\{1\} \times P$ contribute only to the $h^*$-coefficients with
  index in the interval $]b,B[$.
\end{rem}

\section{Proof of Theorem \ref{thm:nog-vG}}
\label{sec:bigproof}
\subsection{Overview}
\label{sec:overview-proof}

We give an overview of the proof of Theorem~\ref{thm:nog-vG}. We use the
notation from that theorem with $\Gamma \coloneqq \Gamma_P$. We start with the
following observation.
\begin{lem}
  For all pairs $\rleft( \Tau_0, \xi_0 \rright), \rleft( \Tau, \xi \rright) \in
  \Upsilon_C \times \Xi_C$, it holds that
  \begin{align*}
    h^*_{\Tau_0,\xi_0} \rleft( C \cap
    \rleft( \vect{v} + \Gamma \rright) \rright) = h^*_{\Tau,\xi} \rleft( C \cap
    \rleft( \vect{v} + \Gamma \rright) \rright) \text{.}
  \end{align*}
\end{lem}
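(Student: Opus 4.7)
The plan is to realize both sides of the asserted equality as the support of one and the same polynomial $F_{\vect{v}}(t)$, namely a ``coset-refined $h^*$-polynomial'' obtained by restricting the standard half-open decomposition of $C$ to lattice points in the coset $\vect{v} + \Gamma$. The point is that this refined polynomial satisfies an Ehrhart-type identity whose left-hand side is manifestly independent of the chosen triangulation.

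First I would record two elementary properties of the reduction map $\vect{u} \mapsto \ulFrac{\vect{u}}_{\Tau,\xi}$ from Definition~\ref{def:hstar-fct}. By construction the difference $\vect{u} - \ulFrac{\vect{u}}_{\Tau,\xi}$ lies in $\sum_{\vect{w} \in \sigma^{(1)}} \Z_{\ge 0} \vect{w}$, and since every $\vect{w} \in \sigma^{(1)}$ belongs to $(\{1\} \times P) \cap \Z^{d+1} \subseteq \Gamma$, one obtains $\ulFrac{\vect{u}}_{\Tau,\xi} \in \vect{u} + \Gamma$. Moreover, any lattice point already in some $\Pi_\sigma[\xi)$ is fixed by the reduction, and $h^*_{\Tau,\xi}$ then returns its first coordinate. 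Combining these observations yields
\begin{equation*}
  h^*_{\Tau,\xi}\rleft( C \cap (\vect{v}+\Gamma) \rright) = \set{ (\vect{w})_1 \with \vect{w} \in \Z^{d+1} \cap (\vect{v}+\Gamma) \cap \bigcup_{\sigma \in \Tau^{(d+1)}} \Pi_\sigma[\xi) }\text{,}
\end{equation*}
where $(\vect{w})_1$ denotes the first coordinate of $\vect{w}$.

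The next step is to introduce the polynomial
\begin{equation*}
  F_{\vect{v},\Tau,\xi}(t) \coloneqq \sum_{\sigma \in \Tau^{(d+1)}} \sum_{\vect{w} \in \Pi_\sigma[\xi) \cap (\vect{v}+\Gamma) \cap \Z^{d+1}} t^{(\vect{w})_1}\text{,}
\end{equation*}
whose support (as a polynomial in $t$ with nonnegative integer coefficients) is precisely the set appearing on the right of the previous display. Combining the tiling of $C$ by half-open cells from Proposition~\ref{prop:hoc-tile-c} with the tiling of $\sigma[\xi)$ by $\mathcal{M}_\sigma$-translates of $\Pi_\sigma[\xi)$ from Proposition~\ref{prop:hop-tile-hoc}, and using $\mathcal{M}_\sigma \coloneqq \sum_{\vect{w} \in \sigma^{(1)}} \Z_{\ge 0} \vect{w} \subseteq \Gamma$ (so that these translations preserve the coset $\vect{v}+\Gamma$), restricting the tiling to $\vect{v}+\Gamma$ produces the generating-function identity
\begin{equation*}
  \sum_{\vect{u} \in C \cap (\vect{v}+\Gamma) \cap \Z^{d+1}} t^{(\vect{u})_1} = \frac{F_{\vect{v},\Tau,\xi}(t)}{(1-t)^{d+1}}\text{,}
\end{equation*}
where $(1-t)^{d+1}$ arises as the height generating function of $\mathcal{M}_\sigma$: each of its $d+1$ ray generators lies in $\{1\} \times P$ and hence has first coordinate $1$.

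The left-hand side of the displayed identity depends only on $\vect{v}$ and $P$ and not on $(\Tau,\xi)$, so the polynomial $F_{\vect{v},\Tau,\xi}(t)$ is in fact independent of $(\Tau,\xi)$; in particular its support is. This yields the lemma. The delicate step --- and the only one where I expect any real difficulty --- is the passage of Proposition~\ref{prop:hop-tile-hoc} to the single coset $\vect{v}+\Gamma$; the inclusion $\mathcal{M}_\sigma \subseteq \Gamma$, ultimately a consequence of the convention that primitive ray generators of $\Tau$ lie in $\{1\} \times P$, is precisely what makes this restriction well-defined and must be invoked with some care in bookkeeping.
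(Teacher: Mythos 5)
Your proposal is correct and takes essentially the same route as the paper's own proof: both define the coset-restricted $h^*$-polynomial (your $F_{\vect{v},\Tau,\xi}$, the paper's $h^*_{P,\vect{v}+\Gamma}$), derive the Ehrhart-type identity equating it, up to the factor $(1-t)^{d+1}$, with the manifestly triangulation-independent height generating series of $C \cap (\vect{v}+\Gamma)$, and conclude that its support --- which equals $h^*_{\Tau,\xi}(C \cap (\vect{v}+\Gamma))$ because the reduction map preserves the coset --- does not depend on $(\Tau,\xi)$.
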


\begin{proof}
  We note that
  \begin{align*}
    \sum_{k = 0}^\infty \rleft| \rleft( \{ k \} \times \rleft( k P \rright) \rright) \cap 
    \rleft( \vect{v} + \Gamma \rright) \rright | t^k =
    \frac{\sum_{\sigma \in \Tau^{(d+1)}} \sum_{\vect{w} \in \Pi_\sigma \rleft[ \xi \rright)
    \cap \rleft( \vect{v} + \Gamma \rright)} t^{\mathrm{ht}\rleft( \vw\rright)}}{(1-t)^{d+1}}\eqqcolon
    \frac{h^*_{P, \vect{v} + \Gamma} (t)}{(1-t)^{d+1}}
  \end{align*}
  where $\mathrm{ht} \colon \R^{d+1} \to \R;\vw = \rleft( w_{0}, \ldots, w_{d}
  \rright) \mapsto w_{0}$ is the projection onto the first coordinate.  In
  particular, for $\vect{v} = \vect{0}$ and $\Gamma = \Z^{d+1}$ this yields the
  usual equality $\sum_{k=0}^\infty \rleft| kP \cap \Z^d \rright| t^k = h^*_P( t
  ) / (1-t)^{d+1}$. We obtain
  \begin{align*}
    h^*_{P, \vect{v} + \Gamma} ( t ) = \sum_{k=0}^s
    h_{k,\vect{v}+\Gamma}^* t^k = \sum_{\sigma \in \Tau^{(d+1)}}
    \sum_{\vw \in \Pi_\sigma[\xi) \cap \rleft( \vv + \Gamma \rright)}
    t^{\hst{\Tau, \xi}{\vw}} \text{.}
  \end{align*}
  Analogous to Equation \eqref{eq:h-star-non-vanish}, it follows that
  \begin{align*}
    \hst{\Tau,\xi}{ C \cap \rleft( \vv +\Gamma \rright)} = 
    \set{ k \in \N  \with h_{k, \vv + \Gamma}^* \neq 0 } \text{,}
  \end{align*}
  so, in particular, this set is independent of the choice of $\rleft( \Tau_0,
  \xi_0 \rright) \in \Upsilon_C \times \Xi_C$.
\end{proof}

The following two propositions will be used in our proof of
Theorem~\ref{thm:nog-vG}.  We will prove them below in
Section~\ref{sec:key-observation} and Section~\ref{sec:fliplemma}, respectively.

\begin{prop}[Changing the generic vector]
  \label{prop:key-obs}
  Let $\Tau \in \Upsilon_C$ and $\vx \in C \cap \Z^{d+1}$. Then there exists
  $a,b \in \Z_{\ge0}$ with $a \le b$ such that
  \begin{align*}
    \set{ h^*_{\Tau, \xi} \rleft( \vect{x} \rright) \with \xi \in
    \Xi_C } = [ a, b ] \cap \Z \text{.}
  \end{align*}
\end{prop}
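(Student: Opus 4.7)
The plan is to view $h^*_{\Tau,\xi}(\vx)$ as an integer-valued function of $\xi$ on $\Xi_C$, show it is constant on the chambers of a natural hyperplane arrangement, and establish that it changes by at most $1$ whenever $\xi$ crosses a single wall. Since the chambers are finite in number and any two are joined by a generic path crossing walls one at a time, a discrete intermediate value argument then forces the image to be an interval of consecutive nonnegative integers.

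\textbf{Chamber structure.} Set $\mathcal{H}_\Tau \coloneqq \bigcup_{\tau \in \Tau^{(d)}} \lspan(\tau)$. Writing $\xi = \sum_{\vv \in \sigma^{(1)}} \lambda_\vv^\sigma \vv$ in each simplicial basis, the sign of $\lambda_\vv^\sigma$ is controlled by the single hyperplane $\lspan(\sigma^{(1)} \setminus \{\vv\}) \in \mathcal{H}_\Tau$. Consequently all index sets $I_\xi(\sigma)$, the partition $\{\sigma[\xi)\}_\sigma$, the cell $\sigma_\xi \in \Tau^{(d+1)}$ containing $\vx$, and the representative $\ulFrac{\vx}_{\Tau,\xi}$ depend only on the chamber of $C \setminus \mathcal{H}_\Tau$ containing $\xi$. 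In particular $h^*_{\Tau,\xi}(\vx)$ takes only finitely many nonnegative integer values.

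\textbf{Wall-crossing.} Suppose $\xi, \xi'$ lie in chambers separated by a single wall $\lspan(\tau)$, where $\tau$ is an interior facet of $\Tau$ shared by two maximal cells $\sigma, \sigma'$, and let $\vv_0, \vv_0'$ be the rays of $\sigma, \sigma'$ opposite to $\tau$. The crossing affects only $I(\sigma)$ and $I(\sigma')$, flipping the memberships of $\vv_0$ and $\vv_0'$ respectively. If $\sigma_\xi \notin \{\sigma,\sigma'\}$, everything relevant to $\vx$ is unchanged. If $\sigma_\xi = \sigma$ and the coefficient $\mu_{\vv_0}$ in the expansion $\vx = \sum \mu_\vv \vv$ is positive, then $\sigma_{\xi'} = \sigma$ and the two representatives differ only in their $\vv_0$-coordinate, which lies in $[0,1)$ versus $(0,1]$ within the same class modulo~$\Z$: they coincide unless $\mu_{\vv_0} \in \Z$, in which case they differ by the lattice generator $\pm \vv_0$ of height~$1$. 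Finally, if $\sigma_\xi=\sigma$ and $\mu_{\vv_0}=0$, then $\vx$ lies on a face of $\tau$ and by uniqueness of the containing cell $\sigma_{\xi'}=\sigma'$; using that the coordinates of $\xi$ in the bases $\sigma^{(1)}$ and $(\sigma')^{(1)}$ agree on the common basis $\tau^{(1)}$ at the wall, one obtains $I_{\xi'}(\sigma') \cap \tau^{(1)} = I_\xi(\sigma) \cap \tau^{(1)}$, so the two representatives of $\vx$ (supported on $\tau^{(1)}$) are identical and the height is preserved. The case $\sigma_\xi = \sigma'$ is symmetric, and in every situation the height changes by at most $1$.

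\textbf{Conclusion and obstacle.} Joining two chambers by a generic piecewise-linear path and applying the wall-crossing analysis exhibits the sequence of heights along the path as a sequence of nonnegative integers with consecutive differences at most $1$, which must therefore attain every integer between its extremes. The main technical obstacle is the case in which $\vx$ swaps simplices across a shared facet: one must verify that the two half-open decompositions, written in distinct bases, yield \emph{the same} lattice representative rather than differing by a nonzero lattice vector, and this rests on the intrinsic nature of the coefficients of $\xi$ along the common facet $\tau$ at the moment of crossing.
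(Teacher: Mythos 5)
Your route is genuinely different from the paper's. The paper first reduces everything to the formula $h^*_{\Tau,\xi}(\vx) = \sum_{\vv \in \sigma^{(1)}} \lFrac{\lambda_\vv} + |I_\xi(\sigma') \cap \{\vv \in \sigma^{(1)} : \lambda_\vv \in \Z\}|$, where $\sigma$ is the unique face with $\vx \in \sigma^\circ$ and $\sigma'$ is the maximal cell with $\sigma^\circ \subseteq \sigma'[\xi)$; it then proves (Lemma~\ref{lem:simpl-compl}) that the family of achievable index sets $I_\xi(\sigma')$ is closed under taking subsets, by the one-directional perturbation $\xi_t = \xi + t\vv$ which, for $t$ large, removes a chosen $\vv$ from $I_\xi(\sigma')$ without disturbing the containment $\sigma^\circ \subseteq \sigma'[\xi_t)$. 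Downward-closedness immediately makes the set of achievable cardinalities an initial segment $\{0,\dotsc,\dim+1\}$, so no analysis of transitions between arbitrary chambers is ever needed. Your wall-crossing/discrete-IVT argument instead must control \emph{every} transition, and this is where it buys you extra work: the paper's move only ever deletes one element from an index set, whereas you must also handle the cell-swap transitions.

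The concrete gap is in the wall-crossing step. Your analysis treats a wall of the arrangement $\mathcal{H}_\Tau$ as the span of a \emph{unique} interior $d$-face $\tau$ shared by exactly two maximal cells $\sigma,\sigma'$. But distinct $d$-faces $\tau_1 \neq \tau_2$ of $\Tau$ can span the same hyperplane (already in the cone over a segment subdivided by an interior lattice point, collinear faces occur), and such coincidences cannot be removed by choosing the path generically. At such a wall, index sets flip simultaneously in more than two maximal cells; the argument that the new containing cell must be the neighbor of $\sigma$ across $\tau$ (which rests on "only $I(\sigma)$ and $I(\sigma')$ changed") fails, and the new cell $\sigma''$ may meet the wall in a facet $\tau_2 \neq \tau_1$. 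Your height-preservation argument then has to compare the coefficients of $\xi$ along $\sigma_0^{(1)}$ in the two bases $\tau_1^{(1)}$ and $\tau_2^{(1)}$ of the \emph{same} hyperplane, and these need not agree, so the claim that the representative of $\vx$ is unchanged (hence that the jump is at most $1$) is not established there. You either need to rule out or separately handle coincident walls, or — more in the spirit of the paper — replace the two-sided path argument by the one-sided perturbation, which sidesteps the issue entirely.
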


\begin{prop}[Changing the triangulation]
  \label{prop:fliplemma}
  Let $\vx \in C \cap \rleft(\vect{v} + \Gamma \rright)$, $\xi \in \Xi_C$ and
  $\Tau, \Tau' \in \Upsilon_C$ be two triangulations. Then there exist $\rleft(
  \Sm_1, \xi_1, \vect{y}_1 \rright), \ldots, \rleft( \Sm_R, \xi_R, \vect{y}_R
  \rright) \in \Upsilon_C \times \Xi_C \times \rleft( C \cap \rleft( \vect{v} +
  \Gamma \rright) \rright)$ such that
  \begin{align*}
    \set{ h^*_{\Sm_i,\xi_i} \rleft( \vect{y}_i \rright) \with
    i = 1, \ldots, R } \cup \set{ h^*_{\Tau,\xi} \rleft(
    \vect{x} \rright), h^*_{\Tau',\xi} \rleft( \vect{x} \rright) }
    = [ a, b ] \cap \Z \text{,}
  \end{align*}
  for two integers $a, b \in \Z_{\ge0}$ with $a \le b$.
\end{prop}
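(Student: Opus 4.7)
The approach is to reduce the proposition to the case of a single bistellar flip and then invoke Proposition~\ref{prop:key-obs} repeatedly to merge the resulting intervals. First I would use the fact that any two regular triangulations of $C$ are connected through a sequence $\Tau = \Tau_0, \Tau_1, \ldots, \Tau_N = \Tau'$ in $\Upsilon_C$ whose consecutive members differ by a single bistellar flip, i.e.\ correspond to adjacent chambers in the secondary fan of the point configuration $\{1\} \times (P \cap \Z^d)$. Since a finite union of integer intervals, any two of which overlap or are separated by at most one, is itself an integer interval, induction on $N$ reduces the proof to the case $N = 1$.

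For a single flip between $\Tau$ and $\Tau'$, Proposition~\ref{prop:key-obs} applied to $(\Tau, \vx)$ and $(\Tau', \vx)$ yields two integer intervals $I_\Tau$ and $I_{\Tau'}$ containing $h^*_{\Tau,\xi}(\vx)$ and $h^*_{\Tau',\xi}(\vx)$, respectively. Each element of $I_\Tau$ is realized by a tuple $(\Tau, \eta, \vx)$ for some $\eta \in \Xi_C$, and similarly for $I_{\Tau'}$. If these two intervals overlap or are separated by at most one, the desired set is their union and we are done. Otherwise a gap remains, which must be filled by tuples involving other lattice points in $\vv + \Gamma$.

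To bridge such a gap, I would exploit the structure of the flip. A bistellar flip is supported on a circuit of lattice points all lying in $\{1\} \times (P \cap \Z^d) \subseteq \Gamma$; consequently, for any lattice point $\vy$ contained in a cell affected by the flip, the two floor representatives of $\vy$ under $\Tau$ and $\Tau'$ differ by a nonzero integer combination of circuit vectors, hence by an element of $\Gamma$. Using this, I would construct a sequence $\vy_1, \ldots, \vy_R \in C \cap (\vv + \Gamma)$ obtained from $\vx$ by successively adding or subtracting single circuit rays, together with triangulations $\Sm_i \in \{\Tau, \Tau'\}$ and generic vectors $\xi_i \in \Xi_C$, so that the intervals produced by applying Proposition~\ref{prop:key-obs} to each $(\Sm_i, \vy_i)$ collectively cover the gap between $I_\Tau$ and $I_{\Tau'}$. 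Their union with $I_\Tau \cup I_{\Tau'}$ is then the required integer interval.

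The main obstacle I anticipate is this last step: constructing the chain $\vy_1, \ldots, \vy_R$ and verifying that its $h^*$-values, once expanded via Proposition~\ref{prop:key-obs}, cover the gap without skipping any integer. Each $\vy_i$ must lie in the cone $C$ (not merely in the affine lattice $\vv + \Gamma$), and the first coordinate of its floor representative must change by exactly one at each circuit step. This is a delicate analysis near the wall shared by the two chambers in the secondary fan, and I expect it to require choosing the $\xi_i$ as limits of generic vectors approaching the wall from each side, so that the two half-open decompositions agree off the flipped support and the effect of a single circuit translation on the height coordinate can be pinned down.
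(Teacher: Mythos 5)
Your reduction to a single flip via connectivity of regular triangulations, and your identification that the gap-filling points $\vect{y}_i$ must be obtained from $\vect{x}$ by integer combinations of the circuit vectors supporting the flip, both match the paper's strategy (the paper additionally localizes to the cell of the almost-triangulation containing $\vect{x}$, which has corank at most $1$, and notes that the corank-$0$ case is trivial). However, the step you flag as the ``main obstacle'' is precisely the content of the paper's key Lemma~\ref{lem:fliplemma-crk-1}, and your sketch of it would not work as stated: it is not true that the height of the floor representative ``changes by exactly one at each circuit step,'' and the mechanism is not a limit of generic vectors approaching the wall of the secondary fan. Adding a ray generator of the cell containing a point leaves its $h^*$-value unchanged (this fact is used elsewhere, in the proof of Theorem~\ref{thm:nog-vG}), so a naive chain of single-ray additions does not march through the intermediate values.

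What actually fills the gap is a number-theoretic argument: writing $\vect{x}=\sum_{\vect{v}}x_{\vect{v}}\vect{v}$ and deforming the coefficients along the circuit relation by a real parameter $t$, one studies the periodic bounded step function $f(t)=\sum_{\vect{v}\in\Am_+}\lFrac{x_{\vect{v}}-t\lambda_{\vect{v}}}+\sum_{\vect{v}\in\Am_0}\lFrac{x_{\vect{v}}}+\sum_{\vect{v}\in\Am_-}\lFrac{x_{\vect{v}}+t\mu_{\vect{v}}}$, whose values at $t=0$ and $t=x_{\vect{v}''}/\lambda_{\vect{v}''}$ are $h^*_{\Tau_-,\xi'}(\vect{x})$ and $h^*_{\Tau_+,\xi''}(\vect{x})$. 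Between consecutive jump discontinuities $f$ is constant, and at a discontinuity it can drop by $l(t)$ or $r(t)$ (the number of coordinates becoming integral); each such drop of size greater than one is then covered by an explicit lattice point $\vect{y}\in C_{\Am}\cap(\vect{v}+\Gamma_{\Am})$ built from the discontinuity together with a \emph{variation of the generic vector} at that point, using the interval structure from Proposition~\ref{prop:key-obs} and Lemma~\ref{lem:tweak-gp}. Without this analysis (or an equivalent one), your argument does not establish that the gap between $I_{\Tau}$ and $I_{\Tau'}$ is actually covered, so the proof is incomplete at its decisive step.
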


\begin{proof}[Proof of Theorem~\ref{thm:nog-vG}]
  Let $a_\vv\coloneqq \min \set{ \hst{\Tau,\xi}{ C \cap \rleft( \vv +\Gamma
      \rright)} }$ and $b_\vv \coloneqq \max \set{ \hst{\Tau,\xi}{ C \cap
      \rleft( \vv +\Gamma \rright)} }$.
	
  Note that we may replace $\vv$ by any element in $\vv + \Gamma$ without
  changing the statement. In particular, we may assume that $\vv \in C$ and that
  $h^*_{\Tau,\xi}(\vv) = a_{\vect{v}}$.  Moreover, there exists an element $\vw
  \in C \cap (\vv + \Gamma)$ with $h^*_{\Tau,\xi}(\vw) = b_{\vect{v}}$. We can
  write $\vw = \vv + \sum_{i=1}^r \vv_i - \sum_{i=1}^s \vv'_i$ with $\vv_1,
  \dotsc, \vv_r, \vv'_1, \dotsc, \vv'_s, \in (\set{1} \times P) \cap \Z^{d+1}$.
	
  Let $\vx_k \coloneqq \vv + \sum_{i=1}^k \vv_i$ for $0 \leq k \leq r$. We are
  going to show that for each $k = 1, \dotsc, r$, there are $\rleft( \Sm_{1},
  \xi_{1}, \vw_{1} \rright), \ldots, \rleft( \Sm_{q}, \xi_{q}, \vw_{q} \rright)
  \in \Upsilon_C \times \Xi_C \times \rleft( C \cap \rleft( \vv + \Gamma
  \rright) \rright)$ such that the corresponding $\hst{\Sm_j,\xi_j}{\vw_j}$ fill
  the gap between $\hst{\Tau,\xi}{\vx_{k-1}}$ and $\hst{\Tau,\xi}{\vx_{k}}$.
  Thus, we can fill the gap between $\vv$ and $\vv + \sum_{i=1}^r \vv_i$.  By
  symmetry, we then can also fill the gap between $\vw$ and $\vw + \sum_{i=1}^s
  \vv'_i = \vv + \sum_{i=1}^r \vv_i$, so the claim follows.
  
  By Proposition~\ref{prop:key-obs} and Proposition~\ref{prop:fliplemma}, it is
  in fact sufficient to show that there exists a triangulation $\Tau' \in
  \Upsilon_C$ and a generic vector $\xi' \in \Xi_C$ such that the gap between
  $\hst{\Tau', \xi'}{\vx_{k-1}}$ and $\hst{\Tau',\xi'}{\vx_{k}}$ can be filled.
	
  For this, let $\Tau'\in \Upsilon_C$ be a pulling triangulation (see, for
  instance, \cite[Section~4.3.2]{DRS:Triangulations}) which uses $\vv_k$ as its
  \emph{last} vertex. Then $\vv_k$ is an extremal ray generator of every
  full-dimensional cone in $\Tau'$,
  cf. \cite[Lemma~4.3.6~(2)]{DRS:Triangulations}.  Choose $\sigma \in \rleft(
  \Tau' \rright)^{(d+1)}$ such that $\vx_{k-1} \in \sigma$ and choose $\xi' \in
  \sigma^\circ \cap \Xi_C$.  Then $\vx_{k-1} \in \sigma\rleft[\xi'\rright)$ and
  it holds that
  \begin{align*}
    \hst{\Tau',\xi'}{\vx_k} = \hst{\Tau',\xi'}{\vx_{k-1} + \vv_k}
    = \hst{\Tau',\xi'}{\vx_{k-1}} \text{.}
  \end{align*}
  Thus, the claim follows. The precise way in which we apply
  Proposition~\ref{prop:key-obs} and Proposition~\ref{prop:fliplemma} is also
  indicated in Figure~\ref{fig:prop-nog}, where an arrow
  \enquote{$\leftrightarrow$} means that the gap between the two endpoints can
  be filled.
\end{proof}

  \begin{figure}[!ht]
    \centering
    \begin{tikzpicture}[description/.style={fill=white,inner sep=2pt}]
      \matrix (m) [matrix of math nodes, row sep=2.5em,
      column sep=2.5em, text height=1.5ex, text depth=0.25ex]
      {   \hst{\Tau ,\xi'}{\vx_{k-1}} &
        \hst{\Tau',\xi'}{\vx_{k-1}} &
        \hst{\Tau',\xi'}{\vx_k    } &
        \hst{\Tau ,\xi'}{\vx_k    } \\
        \hst{\Tau ,\xi }{\vx_{k-1}} &&&
        \hst{\Tau ,\xi }{\vx_k    } \text{.}\\
      };
      \draw [latex-latex] (m-1-1) -- (m-1-2)
      node[midway,above]{\tiny Prop.~\ref{prop:fliplemma}};
      \draw [=,double,double distance=1pt] (m-1-2) -- (m-1-3);
      \draw [latex-latex] (m-1-3) -- (m-1-4)
      node[midway,above]{\tiny Prop.~\ref{prop:fliplemma}};
      \draw [latex-latex] (m-1-1) -- (m-2-1)
      node[midway,right]{\tiny Prop.~\ref{prop:key-obs}};
      \draw [latex-latex] (m-1-4) -- (m-2-4)
      node[midway,left]{\tiny Prop.~\ref{prop:key-obs}};
      \draw [latex-latex,dashed] (m-2-1) -- (m-2-4);
    \end{tikzpicture}
    \caption{How to fill the gap between $h^*_{\Tau,\xi} \rleft( \vect{x}_{k-1}
      \rright)$ and $h^*_{\Tau,\xi} \rleft( \vect{x}_k \rright)$ in the proof of
      Theorem~\ref{thm:nog-vG}.}
    \label{fig:prop-nog}
  \end{figure}
  
\subsection{Changing the generic vector}
\label{sec:key-observation}
In this subsection, we are going to prove Proposition~\ref{prop:key-obs}.  The
next lemma is used in that proof.

\begin{lem}
  \label{lem:simpl-compl}
  Let $\Tau \in \Upsilon_C$ and $\sigma \in \Tau$. Then the set
  \begin{align*}
    \Lambda_{\Tau,\sigma} \coloneqq \set{ I_\xi \rleft( \sigma' \rright) \with
      \sigma' \in \Tau^{(d+1)}, \xi \in \Xi_C, \sigma^\circ \subseteq
      \sigma'\rleft[ \xi \rright)}
  \end{align*}
  is an abstract simplicial complex, \ie closed under taking subsets.
\end{lem}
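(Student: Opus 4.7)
The plan is a direct perturbation argument. Let $I \in \Lambda_{\Tau,\sigma}$ be witnessed by some $\sigma' \in \Tau^{(d+1)}$ and $\xi \in \Xi_C$ (so $\sigma^\circ \subseteq \sigma'\rleft[\xi\rright)$ and $I_\xi(\sigma') = I$), and let $J \subseteq I$. I intend to show that keeping the maximal cell $\sigma'$ fixed and replacing $\xi$ by $\xi' \coloneqq \xi + \sum_{v \in I \setminus J} t_v v$ for suitable positive reals $t_v$ produces a new witness for $J$.

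The preliminary step is to record the following equivalent characterization: $\sigma^\circ \subseteq \sigma'\rleft[\xi\rright)$ if and only if $\sigma$ is a face of $\sigma'$ and $I_\xi(\sigma') \subseteq \sigma^{(1)}$. Since $\Tau$ is simplicial and $\sigma^\circ \subseteq \sigma'\rleft[\xi\rright) \subseteq \sigma'$, the face relation holds automatically, giving $\sigma^{(1)} \subseteq \sigma'^{(1)}$. The unique barycentric expansion over $\sigma'^{(1)}$ of any $x \in \sigma^\circ$ has support exactly $\sigma^{(1)}$, so $x \in \sigma'\rleft[\xi\rright)$ forces every $v \in I_\xi(\sigma')$ to have strictly positive coefficient, which is only possible if $v \in \sigma^{(1)}$; conversely, the same expansion shows this inclusion suffices. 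In particular $I \subseteq \sigma^{(1)}$, so the vectors $v \in I \setminus J$ are ray generators of $\sigma$.

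Now I analyze $\xi'$. Writing $\xi = \sum_{w \in \sigma'^{(1)}} \lambda_w w$, the expansion of $\xi'$ in the same basis differs only in that $\lambda_v$ becomes $\lambda_v + t_v$ for $v \in I \setminus J$. Since $\lambda_v < 0$ for $v \in I = I_\xi(\sigma')$, choosing $t_v > -\lambda_v$ flips the sign of that coefficient, removing $v$ from $I_{\xi'}(\sigma')$. The other coefficients are untouched, so $I_{\xi'}(\sigma') = J$. Because $J \subseteq I \subseteq \sigma^{(1)}$, the characterization from the previous paragraph then gives $\sigma^\circ \subseteq \sigma'\rleft[\xi'\rright)$. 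Moreover $\xi' \in C$, since $\xi \in C$ and $\sum t_v v$ is a nonnegative combination of ray generators of $C$.

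The only potential obstacle is ensuring $\xi' \in \Xi_C$, i.e.~that $\xi'$ is generic with respect to every triangulation in $\Upsilon_C$. Since all primitive ray generators of triangulations in $\Upsilon_C$ lie in the finite set $\rleft(\set{1}\times P\rright)\cap \Z^{d+1}$, the family $\Upsilon_C$ is finite and so $\Xi_C$ is the complement of finitely many hyperplanes. Hence the $t_v$ can be chosen generically in $\rleft(-\lambda_v, \infty\rright)$ to place $\xi'$ in $\Xi_C$. The pair $(\sigma', \xi')$ is then the required witness for $J \in \Lambda_{\Tau,\sigma}$. I do not anticipate serious difficulty beyond the equivalent characterization above, which reduces the combinatorial statement to this simple sign-flipping exercise.
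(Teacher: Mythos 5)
Your proof is correct and follows essentially the same perturbation argument as the paper: both move the generic point $\xi$ in the direction of the rays one wishes to delete from $I_\xi(\sigma')$ and invoke the characterization $\sigma^\circ \subseteq \sigma'\rleft[\xi\rright) \Leftrightarrow I_\xi(\sigma') \subseteq \sigma^{(1)}$ (the paper removes one ray at a time, you remove all of $I\setminus J$ at once, but the idea is identical). The only cosmetic slip is calling the $v \in I\setminus J$ ray generators of $C$ --- they are ray generators of cells of $\Tau$, hence merely points of $C$, which is all the convexity argument needs.
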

\begin{proof}
  We show that if $S \in \Lambda_{\Tau,\sigma}$ and $\vect{v} \in S$, then
  $\rleft( S \setminus \{ \vect{v} \} \rright) \in \Lambda_{\Tau,\sigma}$ as
  well. Hence every subset of $S$, which can be achieved by repeatedly removing
  vectors from $S$, is contained in $\Lambda_{\Tau,\sigma}$.
	
  Take $S \in \Lambda_{\Tau,\sigma}$, choose $\sigma' \in \Tau^{(d+1)}$, $\xi
  \in \Xi_C$ such that $\sigma^\circ \subseteq \sigma'\rleft[ \xi \rright)$ and
  $S = I_\xi \rleft( \sigma' \rright)$, and let $\vect{v} \in S$. For $t \ge 0$
  let $\xi_t \coloneqq \xi + t \vect{v}$. Clearly $\xi_t \in C$ for all $t \ge
  0$. Also, as $\xi_0 = \xi$ is generic it follows that $\xi_t$ is generic for
  all but finitely many values of $t$. For a sufficiently large choice of $t$
  the coefficient of $\vect{v}$ in the linear combination $\xi_t =
  \sum_{\vect{w} \in \rleft( \sigma' \rright)^{(1)}} \mu_{\vect{w},t} \vect{w}$
  is positive and hence $I_{\xi_t} \rleft( \sigma' \rright) = S \setminus
  \set{\vect{v}}$ (see Figure~\ref{fig:xit}).
	
  Moreover, note that $\sigma^\circ \subseteq \sigma' \rleft[ \xi \rright)$ if
  and only if $I_\xi \rleft( \sigma' \rright) \subseteq \sigma^{(1)}$.  As
  $I_{\xi_t} \rleft( \sigma' \rright) \subseteq I_{\xi} \rleft( \sigma'
  \rright)$, it follows that $\sigma^\circ \subseteq \sigma' \rleft[ \xi_t
  \rright)$ and thus $S \setminus \set{\vect{v}} \in \Lambda_{\Tau, \sigma}$.
\end{proof}

  \begin{figure}[!ht]
    \centering
    \begin{tikzpicture}
      \draw (0,0) -- (2.5,2.5) node[right] {$C$};
      \draw (0,0) -- (-7.5,2.5);
      \draw[thin] (0,0) -- (0,2.5);
      \draw[thin] (0,0) -- (-2.5,2.5);
      \draw (-3,1) node[above] {$\sigma'$} -- (1,1) node[below right] {$\{ 1 \} \times P$};
      \fill (0.25,1.5) circle (1pt) node[above]{$\xi$};
      \fill (-2.75,2.5) circle (1pt) node[below]{$\xi_t$};
      \fill (-3,1) circle (1pt) node[below]{$\vect{v}$};
      
      \draw[dashed,-latex] (0.25,1.5) -- (-2.75,2.5);
    \end{tikzpicture}
    \caption{The point $\xi_t$ for large $t>0$.}
    \label{fig:xit}
  \end{figure}

\begin{proof}[Proof of Proposition~\ref{prop:key-obs}]
  There exists a unique cone $\sigma \in \Tau$ such that $\vect{x} \in
  \sigma^\circ$. This cone does not need to be full-dimensional.  We can
  represent $\vect{x}$ as a linear combination $\vect{x} = \sum_{\vect{v} \in
    \sigma^{(1)}} \lambda_\vect{v} \vect{v}$ for positive real numbers
  $\lambda_\vect{v} > 0$. For a given $\xi \in \Xi_C$, there exists a unique
  full-dimensional cone $\sigma' \in \Tau^{(d+1)}$, such that $\sigma^\circ
  \subseteq \sigma'[\xi)$, and hence
  \begin{equation}\label{eq:key-obs}
    h^*_{\Tau, \xi}(\vect{x}) = \sum_{\vect{v} \in \sigma^{(1)}} \lFrac{\lambda_\vect{v}} +
    \rleft| I_\xi \rleft( \sigma' \rright) \cap \set{ \vect{v} \in
      \sigma^{(1)} \with \lambda_\vect{v} \in \Z  } \rright| \text{,}
  \end{equation}
  where $\lFrac{\lambda_\vect{v}}$ denotes the \emph{fractional part} of
  $\lambda_\vect{v}$, \ie $\lambda_\vect{v} - \lfloor \lambda_\vect{v} \rfloor$.
  By Lemma~\ref{lem:simpl-compl}, $\Lambda_{\Tau, \sigma, \vect{x}} \coloneqq
  \set{ S \cap \set{ \vect{v} \in \sigma^{(1)} \with \lambda_{\vect{v}} \in \Z }
    \with S \in \Lambda_{\Tau,\sigma} }$ is an abstract simplicial complex (a
  subcomplex of $\Lambda_{\Tau,\sigma}$). It follows from Eq.~\eqref{eq:key-obs}
  that
  \begin{align*}
    \set{ h^*_{\Tau, \xi} \rleft( \vect{x} \rright) \with \xi \in
    \Xi_C } \subseteq \sum_{\vect{v} \in \sigma^{(1)}}
    \lFrac{\lambda_{\vect{v}}} + \set{ \rleft| S' \rright| \with S'
    \in \Lambda_{\Tau, \sigma, \vect{x}} } \text{.}
  \end{align*}
  The other inclusion \enquote{$\supseteq$} follows by the fact that every $S'
  \in \Lambda_{\Tau, \sigma, \vect{x}}$ has a presentation $S' = I_\xi \rleft(
  \sigma' \rright)$ for $\xi \in \Xi_C$ and $\sigma' \in \Tau^{(d+1)}$ with
  $\sigma^\circ \subseteq \sigma'[\xi)$ (see Lemma~\ref{lem:simpl-compl}).
  Hence
  \begin{align*}
    \set{ h^*_{\Tau, \xi} \rleft( \vect{x} \rright) \with \xi \in
    \Xi_C } = [ a, a + b ] \cap \Z \text{,}
  \end{align*}
  with $a = \sum_{\vect{v} \in \sigma^{(1)}} \lFrac{\lambda_\vect{v}}$ and $b =
  \dim \Lambda_{\Tau, \sigma, \vect{x}} + 1$ where the dimension of an abstract
  simplicial complex is the largest dimension of any of its faces $S$ which in
  turn is $\dim S = | S | - 1$.
\end{proof}

\begin{ex}
  \label{ex:change-triang}
  If we let $\Tau \in \Upsilon_C$ also vary in Proposition~\ref{prop:key-obs},
  then the analogous statement is false in general.
	
  Denote the standard basis of $\R^6$ by $\vect{e}_1, \ldots, \vect{e}_6$ and
  consider the lattice polytope
  \begin{align*}
    P \coloneqq \conv \rleft( 5 \vect{e}_1 - 4 \rleft( \vect{e}_2 + \vect{e}_3 +
    \vect{e}_4 \rright) - 3 \rleft( \vect{e}_5 + \vect{e}_6 \rright),
    \vect{e}_2, \ldots, \vect{e}_6, \vect{0}, 5 \vect{e}_1 - \vect{e}_2 -
    \ldots - \vect{e}_6 \rright) \text{,}
  \end{align*}
  whose only lattice points are its vertices (such polytopes are called
  \emph{empty}). We denote the vertices of $\{ 1 \} \times P \subseteq \R^7$ by
  $\vect{v}_i$ for $i = 1, \ldots, 8$ where the order is taken to be the one as
  they appear in the definition above. Let $C \subseteq \R^7$ be the cone over
  $P$. As $P$ is a circuit (see Remark~\ref{rem:corank1-ptc}), $\Upsilon_C$
  consists of two triangulations $\Tau_+, \Tau_-$ where
  \begin{align*}
    \Tau_+ &= \set{ \cone \rleft( \vect{v}_1, \ldots,
             \vect{v}_{i-1}, \vect{v}_{i+1}, \ldots, \vect{v}_{8} \rright)
             \with i = 1, \ldots, 6 } \text{,}\\
    \Tau_- &= \set{ \cone \rleft( \vect{v}_1, \ldots,
             \vect{v}_{i-1}, \vect{v}_{i+1}, \ldots, \vect{v}_{8} \rright)
             \with i = 7, 8 } \text{.}
  \end{align*}
  We take the lattice point $\vect{x} \coloneqq 4 \vect{e}_0 + \vect{e}_1$ in
  $C$ which has representations
  \begin{align*}
    \vect{x} = \tfrac{1}{5} \vv_1 + \tfrac{4}{5} \rleft( \vv_2 + \vv_3 + \vv_4 \rright)
    + \tfrac{3}{5} \rleft( \vv_5 + \vv_6 \rright) + \tfrac{1}{5} \vv_7 =
    \tfrac{1}{5} \rleft( \vv_2 + \ldots + \vv_6 + \vv_8 \rright) +
    \tfrac{14}{5} \vv_7 \text{.}
  \end{align*}
  For every $\xi \in \Xi_C$, we obtain $h^*_{\Tau_-,\xi} \rleft( \vect{x}
  \rright) = 4$ while $h^*_{\Tau_+,\xi} \rleft( \vect{x} \rright) = 2$, and thus
  $h^*_{\Upsilon_C,\Xi_C} \rleft( \vect{x} \rright) = \{ 2, 4 \}$ is missing the
  number $3$. On the other hand if we fix $\Tau \in \Upsilon_C$, then $\rleft|
  h^*_{\Tau,\Xi_C} \rleft( \vect{x} \rright) \rright| = 1$ and hence does not
  has a gap.
\end{ex}

\subsection{Changing the triangulation}
\label{sec:fliplemma}

The proof of Proposition~\ref{prop:fliplemma} relies on flips in triangulations
and the fact that any two regular triangulations can be connected by a sequence
of flips. We recall some notions and results and refer to
\cite{DRS:Triangulations} (see also \cite[Chapter~7, Section~2]{GKZ}) for
details and references.

\begin{defn}
  \label{def:ptc}
  A \emph{(homogeneous) vector set} in $\R^{d+1}$ is a finite subset $\Am
  \subseteq \R^{d+1}$, such that the first component of each $\vv \in \Am$ is
  $1$.  The number $|\Am| - \dim( \lspan(\Am) )$ is called its \emph{corank}. We
  will never consider inhomogeneous vector sets, hence we will omit the
  specifier \enquote{homogeneous}.
	
  In this paper under a \emph{polyhedral subdivision} $\Sm$ of $\Am$ we will
  understand a subset $\Sm$ of the power set of $\Am$ such that
  \begin{enumerate}
  \item $\set{ \cone( B ) \with B \in \Sm }$ forms a polyhedral subdivision of
    the cone generated by $\Am$, \ie $C_\Am \coloneqq \cone(\Am)$, and
  \item for every $B, B' \in \Sm$
    \begin{align*}
      B \cap \rleft( \cone(B) \cap \cone(B') \rright) = B' \cap \rleft( \cone(B)
      \cap \cone(B') \rright) \text{.}
    \end{align*}
  \end{enumerate}
  A \emph{cell} $B \in \Sm$ is called \emph{simplicial} if it consists of
  linearly independent vectors. A \emph{triangulation} $\Tau$ of $\Am$ is a
  polyhedral subdivision such that all its cells are simplicial.
\end{defn}

\begin{rem}
  \label{rem:special-pt-conf}
  The vector sets which we will deal with in this paper come from lattice points
  on height $1$ contained in the cone over lattice polytopes. In particular,
  subtleties in connection with \enquote{double points} won't appear.

  Given a simplicial cell $B$ of a polyhedral subdivision $\Sm$ of a vector set
  $\Am$, the set $B$ necessarily consists of the primitive generators of the
  extremal rays of $\cone(B)$. In particular, $B$ and $\cone(B)$ uniquely
  determine each other. Hence there is a natural correspondence between
  triangulations of cone $C_\Am$ as defined in Section~\ref{subsec:hot} and
  triangulations of the vector set $\Am$. However, for an arbitrary cell $B$ in
  a polyhedral subdivision $\Sm$ of $\Am$, it is necessary to remember $B$, as
  $\cone(B)$ does not determine $B$ in general. One might want to think of $B$
  as the \enquote{markings} of $\cone(B)$.
\end{rem}

A \emph{refinement} $\Sm'$ of a polyhedral subdivision $\Sm$ is a polyhedral
subdivision where for each $B' \in \Sm'$ there exists $B \in \Sm$ such that $B'
\subseteq B$.

An \emph{almost-triangulation} of a vector set $\Am$ is a pair $\rleft( \Bm, \Sm
\rright)$ of a subset $\Bm \subseteq \Am$ and a polyhedral subdivision $\Sm$ of
simultaneously both $\Am$ and $\Bm$ such that it is not a triangulation but all
its proper refinements (with respect to $\Bm$) are one.

\begin{prop}[{see \cite[Corollary~2.4.6]{DRS:Triangulations}}]
  Every almost-triangulation has exactly \emph{two} proper refinements, which
  are both triangulations.
\end{prop}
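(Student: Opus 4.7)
The plan is to reduce to the classical fact that a \emph{circuit} (a vector set of corank $1$) admits exactly two triangulations, corresponding to the two classes of its unique Radon partition. Let $(\Bm, \Sm)$ be an almost-triangulation. First I will show that $\Sm$ contains a unique non-simplicial cell $B_{0}$. At least one such cell exists because $\Sm$ is not a triangulation. If there were two, say $B_{0}$ and $B_{0}'$, then since $\Sm$ admits a proper refinement and every such refinement is a triangulation by hypothesis, I may pick any triangulation $\Tau$ of $\Bm$ refining $\Sm$ and form $\Sm'$ by replacing $B_{0}$ in $\Sm$ with the simplicial cells of $\Tau|_{B_{0}}$ while leaving $B_{0}'$ unchanged. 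This $\Sm'$ is a proper refinement of $\Sm$ that still contains the non-simplicial cell $B_{0}'$, contradicting the assumption.

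Next I will show that $B_{0}$ has corank exactly $1$. If $B_{0}$ had corank $\ge 2$, I would construct a proper subdivision of $B_{0}$ which is neither the trivial subdivision $\{B_{0}\}$ nor a triangulation --- for example, by choosing a proper sub-circuit $B_{0}^{*} \subsetneq B_{0}$ (which exists in corank $\ge 2$) and refining $B_{0}$ only along the hyperplane supporting the Radon partition of $B_{0}^{*}$. Gluing this intermediate subdivision into $\Sm$ in place of $B_{0}$ produces a proper refinement of $\Sm$ that is not a triangulation, contradicting the almost-triangulation property. Hence $B_{0}$ is a circuit.

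The classical description of the two triangulations of a circuit $B_{0} = B_{0}^{+} \sqcup B_{0}^{-}$ given by its Radon partition then yields exactly two proper refinements of $\Sm$: replace $B_{0}$ by one of these two triangulations, and keep every other (necessarily simplicial) cell of $\Sm$ unchanged. Both refinements are triangulations of $\Bm$, and by the uniqueness argument in the first step they are the only proper refinements of $\Sm$.

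The main obstacle I foresee is making the two gluing steps rigorous: one must verify that after partially refining $\Sm$ (either replacing $B_{0}$ by $\Tau|_{B_{0}}$, or replacing $B_{0}$ by a non-simplicial intermediate subdivision), one still has a valid polyhedral subdivision compatible with the marking by $\Bm$. Along $\partial B_{0}$ this compatibility should be inherited from the ambient triangulation $\Tau$, since $\Tau$ already refines $\Sm$ and hence agrees with the induced structure on every face of $B_{0}$ shared with adjacent cells of $\Sm$.
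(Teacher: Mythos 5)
Your step~1 --- that an almost-triangulation has a \emph{unique} non-simplicial cell --- is false, and the rest of the argument (in particular the final count of \enquote{exactly two}) rests on it. The paper's own Example~\ref{ex:flip} is a counterexample: the almost-triangulation $\Sm$ pictured in Figure~\ref{fig:flip} has the two maximal cells $\{\vv_1,\vv_2,\vv_3,\vv_5\}$ and $\{\vv_1,\vv_3,\vv_4,\vv_5\}$, both non-simplicial of corank $1$, and both containing the same circuit $\{\vv_1,\vv_3,\vv_5\}$. Your argument for uniqueness breaks down exactly here: if you replace only one of these two cells by a triangulation of it while leaving the other untouched, the resulting collection of cells is \emph{not} a polyhedral subdivision in the sense of Definition~\ref{def:ptc} --- either the cones fail to meet along common faces (condition~(1)), or the markings of the shared face disagree (condition~(2)), depending on which of the two triangulations of the circuit you use. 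So no proper refinement with a surviving non-simplicial cell is produced, and the contradiction you invoke evaporates.

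The correct structure of the argument (this is how \cite{DRS:Triangulations} proves Corollary~2.4.6) is: first, every cell of an almost-triangulation has corank at most $1$ (their Lemma~2.4.5 --- your step~2 is a sketch of this, and is the right idea, though the construction of an intermediate non-triangulation refinement for corank $\ge 2$ needs more care than \enquote{refine along the Radon hyperplane of a sub-circuit}); second, and this is the real content you are missing, one shows that all the non-simplicial cells contain one and the same circuit $Z$, and that the face-compatibility conditions force any proper refinement to use the \emph{same} triangulation ($\Tau_+$ or $\Tau_-$) of $Z$ in every one of these cells simultaneously, extended by joining with the respective links. Only then does the dichotomy for a single circuit yield exactly two proper refinements. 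Without this linkage argument one could a priori get $2^k$ refinements from $k$ corank-$1$ cells, or non-triangulation refinements by mixing the choices; ruling both out is precisely where the compatibility conditions of Definition~\ref{def:ptc} must be used.
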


Two triangulations $\Tau_1$, $\Tau_2$ of the same vector set $\Am$ are
\emph{connected by a flip} if there is an almost-triangulation $\rleft( \Bm, \Sm
\rright)$ of $\Am$ such that $\Tau_1$ and $\Tau_2$ are the only two
triangulations refining $\rleft( \Bm, \Sm \rright)$.

\begin{ex}
  \label{ex:flip}
  Consider the vector set $\Am \coloneqq \set{ \vv_1, \ldots, \vv_{10} }
  \subseteq \R^3$ whose projection to $\R^2$ by forgetting the first coordinate
  is given in Figure~\ref{fig:flip}. To simplify notation we will abbreviate the
  subset $\{ \vv_{i_1}, \ldots, \vv_{i_k} \} \subseteq \{ \vv_1, \ldots,
  \vv_{10} \}$ by \enquote{$i_1 \ldots i_k$}.
  \begin{figure}[!ht]
    \centering
    \begin{tikzpicture}
      \fill (1,2) node[above]{\tiny $1$} circle (1pt);
      \fill (3,2) node[above]{\tiny $3$} circle (1pt);
      \fill (2,1) node[above]{\tiny $2$} circle (1pt);
      \fill (2,2) node[above]{\tiny $5$} circle (1pt);
      \fill (2,3) node[above]{\tiny $4$} circle (1pt);
      \fill (1.66,2.33) node[above]{\tiny $6$} circle (1pt);
      \fill (2.5,2) node[above]{\tiny $7$} circle (1pt);
      \fill (1.66,1.66) node[above]{\tiny $8$} circle (1pt);
      \fill (2.5,1.5) node[above]{\tiny $9$} circle (1pt);
      \fill (2,2.5) node[above]{\tiny $10$} circle (1pt);
      \node[left] at (1,2) {$\Am\coloneqq$};
    \end{tikzpicture}\hspace{4em}
    \begin{tikzpicture}[scale=0.5]
      \draw (-1,0) -- (1,0) -- (0,1) -- cycle;
      \draw (-1,0) -- (1,0) -- (0,-1) -- cycle;
      \fill (-1,0) node[above left]{$\Tau_1$} circle (2pt);
      \fill (1,0) circle (2pt);
      \fill (0,1) circle (2pt);
      \fill(0,-1) circle (2pt);
      
      \draw (1,2) -- (3,2) -- (2,1) -- cycle;
      \draw (1,2) -- (3,2) -- (2,3) -- cycle;
      \fill (1,2) node[above]{$\Sm$} circle (2pt);
      \fill (3,2) circle (2pt);
      \fill (2,1) circle (2pt);
      \fill (2,2) circle (2pt);
      \fill (2,3) circle (2pt);

      \draw (3,0) -- (5,0) -- (4,-1) -- cycle;
      \draw (3,0) -- (5,0) -- (4,1) -- cycle;
      \draw (4,-1) -- (4,1);
      \fill (3,0) circle (2pt);
      \fill (5,0) node[above right]{$\Tau_2$} circle (2pt);
      \fill (4,0) circle (2pt);
      \fill (4,-1) circle (2pt);
      \fill (4,1) circle (2pt);

      \draw[-latex,dashed] (1.5,1.5) -- (0.5,0.5);
      \draw[-latex,dashed] (2.5, 1.5) -- (3.5, 0.5);
    \end{tikzpicture}
    \caption{The flip of Example~\ref{ex:flip}.}
    \label{fig:flip}
  \end{figure}
  The two triangulations $\Tau_1 \coloneqq \{ 123,\allowbreak 134,\allowbreak
  12,\allowbreak 23,\allowbreak 13,\allowbreak 14,\allowbreak 34,\allowbreak
  1,\allowbreak 2,\allowbreak 3,\allowbreak 4, \allowbreak \emptyset \}$ and
  $\Tau_2 \coloneqq \{ 125,\allowbreak 235,\allowbreak 345,\allowbreak
  145,\allowbreak 12,\allowbreak 15,\allowbreak 25,\allowbreak 35,\allowbreak
  23,\allowbreak 45,\allowbreak 34,\allowbreak 14,\allowbreak 1,\allowbreak
  2,\allowbreak 3,\allowbreak 4,\allowbreak 5, \allowbreak \emptyset \}$ are
  connected by a flip supported on the almost-triangulation $\rleft( \Bm, \Sm
  \rright)$ where $\Bm \coloneqq \{ 1, \ldots, 5 \}$ and $\Sm = \{
  1235,\allowbreak 1345,\allowbreak 153,\allowbreak 12,\allowbreak
  23,\allowbreak 14,\allowbreak 34,\allowbreak 1,\allowbreak 2,\allowbreak
  3,\allowbreak 4, \allowbreak \emptyset \}$.
\end{ex}

Vector sets of corank $1$ will play an important role in the proof, so let us
recall some facts. We refer to \cite[Section~2.4]{DRS:Triangulations} for
details.

\begin{rem}
  \label{rem:corank1-ptc}
  A corank $1$ vector set $\Am \subseteq \R^{d+1}$ possesses a unique linear
  dependence relation, say $\vect{0} = \sum_{\vv \in \Am} \lambda_{\vv} \vv$,
  which partitions $\Am$ into three subsets $\Am_-$, $\Am_0$ and $\Am_+$:
  \begin{align*}
    \Am_+ \coloneqq \set{ \vv \in \Am \with \lambda_{\vv} > 0 }, \quad
    \Am_0 \coloneqq \set{ \vv \in \Am \with \lambda_{\vv} = 0 }, \quad
    \Am_- \coloneqq \set{ \vv \in \Am \with \lambda_{\vv} < 0 } \text{.}
  \end{align*}
  The following are the only two triangulations of $\Am$, both are regular.
  \begin{align*}
    \Tau_+ \coloneqq \set{ \cone \rleft( B \rright) \colon \Am_+ \not\subseteq
    B \subseteq \Am } \qquad \text{and} \qquad
    \Tau_- \coloneqq \set{ \cone \rleft( B \rright) \colon \Am_- \not\subseteq
    B \subseteq \Am } \text{.}
  \end{align*}
  By \cite[Theorem~4.4.1]{DRS:Triangulations}, the two triangulations $\Tau_+$
  and $\Tau_-$ form the prototype of a flip. A lattice polytope $P \subseteq
  \R^d$ such that its homogenized vertices form a vector set $\Am$ of corank $1$
  with $\Am_0 = \emptyset$ is called a \emph{circuit}.
\end{rem}

Proposition~\ref{prop:fliplemma} will follow from the following further
reduction to the case of corank $1$.
\begin{lem}
  \label{lem:fliplemma-crk-1}
  Let $\Am \subseteq \Z^{d+1}$ be a vector set of corank $1$, $\vv \in
  \Z^{d+1}$, $\xi \in \Xi_{C_\Am}$, $\Gamma_\Am$ be the sublattice of $\Z^{d+1}$
  generated by $\Am$ and $\vx \in C_\Am \cap (\vv + \Gamma_\Am)$. Then there
  exist $\rleft( \Sm_1, \xi_1, \vect{y}_1 \rright), \ldots, \rleft( \Sm_R,
  \xi_R, \vect{y}_R \rright) \in \set{ \Tau_+, \Tau_- } \times \Xi_{C_\Am}
  \times \rleft( C_\Am \cap \rleft( \vect{v} + \Gamma_\Am \rright) \rright)$
  such that
  \begin{align*}
    \set{ h^*_{\Sm_i, \xi_i} \rleft( \vect{y}_i \rright) \with
    i = 1, \ldots, R } \cup \set{ h^*_{\Tau_+,\xi}
    \rleft( \vect{x} \rright), h^*_{\Tau_-,\xi} \rleft( \vect{x} \rright) }
    = \rleft[ a, b \rright] \cap \Z \text{,}
  \end{align*}
  for two nonnegative integers $a \le b$.
\end{lem}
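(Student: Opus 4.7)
I would first reduce to the case of a circuit, i.e.\ $\Am_0 = \emptyset$: elements of $\Am_0$ belong to the extremal rays of every maximal cell of both $\Tau_+$ and $\Tau_-$ and hence to every half-open fundamental parallelepiped in the same manner, so they contribute identically to both sides and can be factored out of the analysis. In the circuit case, I would name the canonical witnesses of the two prescribed $h^*$-values. For $\varepsilon \in \{+,-\}$ let $\sigma_\varepsilon \in \Tau_\varepsilon$ be the unique maximal cell with $\vect{x} \in \sigma_\varepsilon\rleft[\xi\rright)$, and let $\vect{y}_\varepsilon \in \Pi_{\sigma_\varepsilon}\rleft[\xi\rright) \cap \Z^{d+1}$ be the unique lattice point with $\vect{x} - \vect{y}_\varepsilon \in \sum_{\vv \in \sigma_\varepsilon^{(1)}} \Z_{\ge 0}\vv$. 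Then $h^*_{\Tau_\varepsilon,\xi}(\vect{x}) = \mathrm{ht}(\vect{y}_\varepsilon)$ by Definition~\ref{def:hstar-fct}, and since $\vect{y}_\varepsilon \in \vect{x} + \Gamma_\Am \subseteq \vect{v} + \Gamma_\Am$, the two extremal values are already realised inside the coset by the triples $(\Tau_\varepsilon,\xi,\vect{y}_\varepsilon)$.

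Setting $h_\pm \coloneqq h^*_{\Tau_\pm,\xi}(\vect{x})$ and assuming without loss of generality $h_+ \le h_-$, it remains to exhibit, for each integer $k \in [h_+,h_-]$, a triple $(\Sm_k,\xi_k,\vect{z}_k) \in \{\Tau_+,\Tau_-\} \times \Xi_{C_\Am} \times \rleft( C_\Am \cap \rleft(\vect{v} + \Gamma_\Am\rright) \rright)$ with $h^*_{\Sm_k,\xi_k}(\vect{z}_k) = k$. The plan is to build a path of lattice points $\vect{y}_+ = \vect{z}_{h_+}, \vect{z}_{h_++1}, \ldots, \vect{z}_{h_-} = \vect{y}_-$ with consecutive height difference equal to one, using the unique linear relation $\sum_{\vv\in\Am_+}\mu_\vv\vv = \sum_{\vv\in\Am_-}\nu_\vv\vv$ of the circuit (with $\mu_\vv, \nu_\vv \in \Z_{>0}$). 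Each step replaces the current point by itself plus or minus a single element of $\Am$ in a way that stays inside $C_\Am$ and in the coset; partial substitutions of the relation serve to interpolate between the expansions of $\vect{x}$ coming from $\sigma_+$ and from $\sigma_-$. For each intermediate $\vect{z}_k$, Proposition~\ref{prop:key-obs} then lets us choose $\xi_k \in \Xi_{C_\Am}$ so that $\vect{z}_k$ sits inside a fundamental parallelepiped $\Pi_{\tau_k}\rleft[\xi_k\rright)$ for some maximal cell $\tau_k$ of $\Sm_k \in \{\Tau_+,\Tau_-\}$; this forces $h^*_{\Sm_k,\xi_k}(\vect{z}_k) = \mathrm{ht}(\vect{z}_k) = k$, as required.

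The main obstacle will be to guarantee that every intermediate height $k$ is realised by a $\vect{z}_k$ admitting such a fundamental-parallelepiped realisation in at least one of $\Tau_+$ or $\Tau_-$. This amounts to controlling the expansion coefficients of $\vect{z}_k$ in a basis of the form $\sigma_\pm^{(1)}$: after each partial substitution via the circuit's relation, the coefficients shift by rational quantities $\pm \mu_\vv/\mu_{\vv^*}$ or $\pm\nu_\vv/\nu_{\vw^*}$, and one has to verify that they remain in $[0,1]$, with the open/closed endpoint dictated by $I_{\xi_k}$. The corank-$1$ structure is decisive here, because only two bases $\Am \setminus \{\vv^*\}$ (with $\vv^* \in \Am_+$) and $\Am \setminus \{\vw^*\}$ (with $\vw^* \in \Am_-$) are available, and switching between them via the relation is precisely what converts a coefficient sitting at the boundary under one basis into an interior coefficient under the other. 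Together with Proposition~\ref{prop:key-obs}, which lets us rebalance the half-open structure by moving $\xi_k$ inside a chosen cone, this provides exactly enough flexibility to hit every integer in $[h_+,h_-]$ and yields the contiguous interval $[a,b] \cap \Z$ claimed in the statement.
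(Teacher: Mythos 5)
Your setup is sound --- reducing to the circuit, identifying the two bases $\Am\setminus\set{\vv^*}$ coming from $\Tau_+$ and $\Tau_-$, and recognising that the whole difficulty is to produce, for each intermediate integer, a coset point realising it as an $h^*$-value --- but the proposal stops exactly where the proof has to begin. The closing assertion that the partial substitutions of the circuit relation ``provide exactly enough flexibility to hit every integer in $[h_+,h_-]$'' is precisely the content of the lemma, and no mechanism is offered that guarantees it. Moreover, the guiding picture of a monotone path $\vect{z}_{h_+},\ldots,\vect{z}_{h_-}$ of lattice points with unit height steps, each sitting in a fundamental parallelepiped, is not what actually happens: as one slides mass along the circuit relation the realised values oscillate (compare Figure~\ref{fig:bsf}, where the relevant function moves between $2$ and $6$ before returning), and several consecutive values are realised by \emph{one and the same} lattice point $\vy$ under different generic vectors $\xi$, via the term $\rleft| I_\xi(\sigma') \cap \set{\vv \with \lambda_\vv \in \Z} \rright|$ in Eq.~\eqref{eq:key-obs}, rather than by points of consecutive heights reached one addition or subtraction at a time.

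The missing idea is the number-theoretic step function. The paper deforms the coefficient vector of $\vx$ continuously, writing $\vx = \sum_{\vv\in\Am_+}(x_\vv - t\lambda_\vv)\vv + \sum_{\vv\in\Am_0}x_\vv\vv + \sum_{\vv\in\Am_-}(x_\vv + t\mu_\vv)\vv$ for $t \in [0, x_{\vv''}/\lambda_{\vv''}]$, and studies the integer-valued periodic step function $f(t)$ given by the sum of the fractional parts of these coefficients; its endpoint values are $h^*_{\Tau_-,\xi'}(\vx)$ and $h^*_{\Tau_+,\xi''}(\vx)$. The computation of the one-sided limits, $\lim_{t\to t_0-}f = f(t_0)+l(t_0)$ and $\lim_{t\to t_0+}f = f(t_0)+r(t_0)$, shows that the plateaux of $f$ between consecutive potential jump discontinuities agree, so the only possible gaps are the intervals $[f(t), f(t)+r(t)-1]$ and $[f(t), f(t)+l(t)-1]$ hanging below each plateau; these are then filled by the explicit witness $\vy = \vx - (x_{\vv_0}-t\lambda_{\vv_0})\vv_0 + \sum_{\vv\in\Am_+\setminus\set{\vv_0}}\vv$, whose $h^*$-value sweeps out the whole sub-interval as $\xi$ varies, using Lemma~\ref{lem:tweak-gp} and Proposition~\ref{prop:key-obs}. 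Without this quantitative control --- or some equivalent bookkeeping of exactly how many coefficients cross integer values at each substitution --- the claim that no integer in $[h_+,h_-]$ is skipped remains unproved.
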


We will prove Lemma~\ref{lem:fliplemma-crk-1} in Section~\ref{sec:corank1}. The
following technical lemma will be needed to make a generic point \enquote{more}
generic.

\begin{lem}
  \label{lem:tweak-gp}
  Let $C \subseteq \R^{d+1}$ be a full-dimensional cone, $\sigma \subseteq C$ a
  simplicial full-dimensional subcone and $H_1, \ldots, H_R \subseteq \R^{d+1}$
  a family of (linear) hyperplanes.  For every $\xi \in C$, there exists $\xi'
  \in C^\circ \setminus \rleft( \bigcup_{j=1}^R H_j \rright)$ with $I_\xi
  \rleft( \sigma \rright) = I_{\xi'} \rleft( \sigma \rright)$ (see
  Definition~\ref{def:hot}).
\end{lem}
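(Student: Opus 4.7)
The plan is to observe that $I_{(\cdot)}(\sigma)$ is locally constant away from the hyperplanes spanned by the facets of $\sigma$, so the set of $\eta$ with $I_\eta(\sigma) = I_\xi(\sigma)$ contains an open neighborhood of $\xi$. Combined with density of $C^\circ$ in $C$ (since $C$ is full-dimensional) and the fact that finitely many hyperplanes form a nowhere dense set, this will yield $\xi'$ by a routine topological argument.

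Concretely, because $\sigma$ is full-dimensional and simplicial, $\sigma^{(1)}$ is a basis of $\R^{d+1}$, so the coordinate functionals $\lambda_{\vect{v}}$ defined by $\eta = \sum_{\vect{v} \in \sigma^{(1)}} \lambda_{\vect{v}}(\eta)\vect{v}$ are continuous linear maps $\R^{d+1} \to \R$. Since $I_\xi(\sigma)$ is well-defined, $\lambda_{\vect{v}}(\xi) \neq 0$ for each $\vect{v} \in \sigma^{(1)}$, and hence
\begin{align*}
  U \coloneqq \bigcap_{\vect{v} \in \sigma^{(1)}} \set{\eta \in \R^{d+1} \with \sign \lambda_{\vect{v}}(\eta) = \sign \lambda_{\vect{v}}(\xi)}
\end{align*}
is a finite intersection of open half-spaces, hence an open neighborhood of $\xi$, and $I_\eta(\sigma) = I_\xi(\sigma)$ for every $\eta \in U$.

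Next I would invoke the hypothesis that $C$ is full-dimensional, which makes $C^\circ$ dense in $C$; since $\xi \in C \subseteq \overline{C^\circ}$, the open set $U$ must meet $C^\circ$, so $U \cap C^\circ$ is a non-empty open subset of $\R^{d+1}$. Each $H_j$ is a nowhere dense hyperplane, so $(U \cap C^\circ) \setminus \bigcup_{j=1}^R H_j$ is still non-empty, and any point in this set serves as $\xi'$.

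There is no substantive obstacle here — the lemma is a topological triviality meant to upgrade a given generic point to a \emph{more} generic one lying in $C^\circ$. The only fine point worth flagging is that $\xi$ must be tacitly assumed generic with respect to $\sigma$ (so that $I_\xi(\sigma)$ is defined at all); this is exactly what ensures the sign-matching condition is open at $\xi$ and produces the neighborhood $U$.
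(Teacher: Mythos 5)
Your argument is fine in the generic case, but the \enquote{fine point} you flag away at the end is exactly where the lemma has to do real work, so there is a genuine gap. The lemma is quantified over \emph{every} $\xi \in C$, and in the proof of Lemma~\ref{lem:fliplemma-crk-1} it is applied with $\xi = \vect{v}_0$, a ray generator of the corank-one cell $\Am$, whose coordinates with respect to $\sigma_0^{(1)} = \Am \setminus \set{\vect{v}_0}$ vanish on all of $\Am_0$ (which is nonempty already for cells such as $1235$ in Example~\ref{ex:flip}). The intended convention for such $\xi$ --- visible in the paper's own proof, which says \enquote{the zero coefficients become positive} --- is that $I_\xi(\sigma)$ consists of the \emph{strictly negative} coordinates, the zero ones simply not belonging to it. For such a $\xi$ your set $U$ is not open: the condition $\sign \lambda_{\vect{v}}(\eta) = \sign \lambda_{\vect{v}}(\xi) = 0$ confines $\eta$ to a hyperplane, so $U$ has empty interior, contains no generic points at all, and the subsequent topological argument collapses in precisely the case that is needed.

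The repair is small but necessary. One can replace $U$ by $\set{\eta \with \lambda_{\vect{v}}(\eta) < 0 \text{ for } \vect{v} \in I_\xi(\sigma),\ \lambda_{\vect{v}}(\eta) > 0 \text{ for } \vect{v} \in \sigma^{(1)} \setminus I_\xi(\sigma)}$, which is open and nonempty, but then $\xi$ only lies in its closure, so \enquote{$U$ is a neighborhood of $\xi$, hence meets $C^\circ$} is no longer available and one must argue separately that $U \cap C^\circ \neq \emptyset$. The paper does both steps at once by moving along the segment $\xi(t) = (1-t)\xi + t\vect{x}$ towards a point $\vect{x} \in \sigma^\circ \setminus \bigcup_{j} H_j$: for small $t>0$ the negative coordinates stay negative, the positive ones stay positive, the zero ones become positive, $\xi(t) \in C^\circ$ by convexity, and all but finitely many $t$ avoid the hyperplanes. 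Restricted to generic $\xi$ your argument is equivalent to this; you should either adopt the segment argument or otherwise treat the vanishing coordinates explicitly rather than assuming them away.
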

\begin{proof}
  As $\sigma$ is full-dimensional and simplicial, there exists a unique
  representation $\xi = \sum_{\vect{v} \in \sigma^{(1)}} \lambda_{\vect{v}}
  \vect{v}$. Further, there exists $\vx \in \sigma^\circ \setminus \rleft(
  \bigcup_{j=1}^R H_j \rright)$, which has a representation $\vx = \sum_{\vv \in
    \sigma^{(1)}} \mu_{\vv} \vect{v}$ with $\mu_\vv > 0$ for all $\vv \in
  \sigma^{(1)}$.
	
  For $0 \leq t \leq 1$ let $\xi(t) \coloneqq (1-t) \xi + t \vx$. Clearly
  $\xi(t) \in C^\circ$ for all $0 < t \leq 1$. As $\xi(1) \notin \bigcup_{j=1}^R
  H_j$, the points $\xi(t)$ avoid the hyperplanes $H_i$ for all but finitely
  many values of $t$.  If we choose $t$ close to $0$, then $\mu_\vv > 0$ for
  every $\vv \in \sigma^{(1)}$ implies that the nonzero coefficients of $\xi(t)$
  in the basis $\sigma^{(1)}$ have the same signs as the $\lambda_{\vect{v}}$,
  and the zero coefficients become positive. Hence for such a choice of $t$,
  $\xi' \coloneqq \xi(t)$ satisfies the claim.
\end{proof}

\begin{proof}[Proof of Proposition~\ref{prop:fliplemma}]
  As both $\Tau$ and $\Tau'$ are regular triangulations of the same vector set
  $\Am_0 \coloneqq \rleft( \{ 1 \} \times P \rright) \cap \Z^{d+1}$, they are
  connected by a sequence of flips (see, for instance,
  \cite[Theorem~5.3.7]{DRS:Triangulations}), \ie there is a finite sequence of
  triangulations of the vector set $\Am_0$ such that every two consecutive
  triangulations differ in a flip.  It is sufficient to prove our claim for
  every pair of consecutive triangulations in this sequence, and hence we assume
  from now on that $\Tau$ and $\Tau'$ differ only by a flip.
	
  Let $\rleft( \Bm, \Sm \rright)$ be the almost-triangulation such that $\Tau$
  and $\Tau'$ are the two proper refinements of it. According to
  \cite[Lemma~2.4.5]{DRS:Triangulations}, each cell $\Am \in \Sm^{(d+1)}$ has
  corank at most $1$. Take $\Am \in \Sm^{(d+1)}$ such that $\vect{x} \in \sigma
  \coloneqq \cone \rleft( \Am \rright)$ and fix $\xi' \in \Xi_C \cap
  \sigma^\circ$.

  If $\Am$ has corank $0$, then $\Am \in \Tau \cap \Tau'$, and thus
  $h^*_{\Tau,\xi'} \rleft( \vect{x} \rright) = h^*_{\Tau',\xi'} \rleft( \vect{x}
  \rright)$. The statement follows by Proposition~\ref{prop:key-obs}.

  If $\Am$ has corank $1$, then (up to swapping $\Tau_+$ and $\Tau_-$) we may
  assume $\Tau_+ \subseteq \Tau$ and $\Tau_- \subseteq \Tau'$ where $\Tau_\pm$
  denote the two triangulations of $\Am$ (see Remark~\ref{rem:corank1-ptc}). We
  obtain
  \begin{align*}
    h^*_{\Tau,\xi'} \rleft( \vect{x} \rright) =
    h^*_{\Tau_+,\xi'} \rleft( \vect{x} \rright) \qquad\text{and}\qquad 
    h^*_{\Tau',\xi'} \rleft( \vect{x} \rright) =
    h^*_{\Tau_-,\xi'} \rleft( \vect{x} \rright) \text{.}
  \end{align*}
  By Lemma~\ref{lem:fliplemma-crk-1}, there exist $\rleft( \Sm_1, \xi_1,
  \vect{y}_1 \rright), \ldots, \rleft( \Sm_R, \xi_R, \vect{y}_R \rright) \in \{
  \Tau_+,\Tau_- \} \times \Xi_{C_{\Am}} \times \rleft( C_{\Am} \cap \rleft(
  \vect{v} + \Gamma_\Am \rright) \rright)$ such that the $h^*_{\Sm_i,\xi_i}
  \rleft( \vect{y}_i \rright)$ fill the gap between $h^*_{\Tau_+,\xi'} \rleft(
  \vect{x} \rright)$ and $h^*_{\Tau_-,\xi'} \rleft( \vect{x} \rright)$. By
  Lemma~\ref{lem:tweak-gp}, we may assume $\xi_i \in \Xi_C$ without changing the
  value of $h^*_{\Sm_i,\xi_i} \rleft( \vect{y}_i \rright)$. Moreover $C_\Am \cap
  \rleft( \vv + \Gamma_\Am \rright) \subseteq C \cap \rleft( \vv + \Gamma
  \rright)$. If $\Sm_i = \Tau_+$ for some $i = 1, \ldots, R$, then
  $h^*_{\Sm_i,\xi_i} \rleft( \vect{y}_i \rright) = h^*_{\Tau,\xi_i} \rleft(
  \vect{y}_i \rright)$ (and analogously for $\Sm_i = \Tau_-$). In particular, we
  can fill the gap between $h^*_{\Tau,\xi'}(\vect{x})$ and
  $h^*_{\Tau',\xi'}(\vect{x})$. The statement follows by
  Proposition~\ref{prop:key-obs}.
\end{proof}

\subsection{The corank \texorpdfstring{$1$}{1} case}
\label{sec:corank1}

In this section we will prove Lemma~\ref{lem:fliplemma-crk-1}. In its proof we
will consider certain functions which we want to discuss separately here.  We
denote by $\lFrac{x}$ the \emph{fractional part} of a real number $x$, \ie $x -
\floor{x}$ where $\floor{x}$ is the largest integer less than or equal to
$x$. For two finite families of positive integers $\rleft( \lambda_i \rright)_{i
  \in I}$ and $\rleft( \mu_j \rright)_{j \in J}$ with $\gcd\rleft( \lambda_i,
\mu_j \colon i \in I, j \in J \rright) = 1$ and $\sum_{i \in I} \lambda_i =
\sum_{j \in J} \mu_j$ and a further family $\rleft( x_k \rright)_{k \in I \cup
  J}$ of rational numbers, we define
\begin{align*}
  f \colon \R \to \Q; t \mapsto \sum_{i \in I} \lFrac{x_i - \lambda_i t} +
  \sum_{j \in J} \lFrac{x_j + \mu_j t} \text{,}
\end{align*}
which is a periodic bounded step function with period $1$. Such functions have
already appeared in number theory and algebraic geometry (see, for instance,
\cite{Vasyunin,Borisov:QuotSing,BB:BoundStep,Bober:FracRat}).

The function $f$ is piecewise constant and the interesting $t$-values are the
ones where $f(t)$ is different from its left-handed or right-handed limit. We
call those $t$ \emph{potential jump discontinuities} and observe that this is
the case if and only if $x_i - \lambda_i t \in \Z$ for some $i \in I$ or $x_j +
\mu_j t \in \Z$ for some $j \in J$. We define for a potential jump discontinuity
$t$
\begin{align*}
  l(t) \coloneqq \rleft| \set{ j \in J \with x_j + \mu_j t \in \Z } \rright|
  \qquad\text{and}\qquad
  r(t) \coloneqq \rleft| \set{ i \in I \with x_i - \lambda_i t \in \Z } \rright| \text{.}
\end{align*}
In the following $\lim_{t \to t_0-} f(t)$ (resp.~$\lim_{t \to t_0+} f(t)$) will
denote the left-handed (resp.~right-handed) limit of a function $f \colon \R \to
\R$.
\begin{lem}
  For a potential jump discontinuity $t_0 \in \R$ the relationship between
  $f(t_0)$ and its left- resp.~right-handed limit is given as follows:
  \begin{align*}
    \lim_{t \to t_0 -} f(t) = f \rleft( t_0 \rright) + l \rleft( t_0 \rright) \text{,}
    \qquad \text{and} \qquad
    \lim_{t \to t_0 +} f(t) = f \rleft( t_0 \rright) + r \rleft( t_0 \rright) \text{.}
  \end{align*}
  \begin{figure}[!ht]
    \centering
    \begin{tikzpicture}
      \draw[thick] (-0.5,0) node[left]{\tiny$\displaystyle\lim_{t\to t_0-}f(t)$} -- (0,0);
      \draw[thick] (0,0) -- (0,1);
      \draw[thick] (0,1) -- (0.5,1) node[right] {\tiny$\displaystyle\lim_{t\to t_0+}f(t)$};

      \draw [decorate,decoration={brace,amplitude=5pt},xshift=-2pt,yshift=0pt]
      (0,-1) -- (0,0) node[midway,left,xshift=-3pt]{\tiny$l \rleft( t_0
        \rright)$};

      \draw [decorate,decoration={brace,amplitude=5pt},xshift=2pt,yshift=0pt]
      (0,1) -- (0,-1) node[midway,right,xshift=3pt]{\tiny$r \rleft( t_0
        \rright)$};

      \fill (0,-1) circle (1pt) node[right]{\tiny$f\rleft( t_0 \rright)$};
    \end{tikzpicture}
    \caption{Relationship between $f(t_0)$ and its left- resp.~right-handed
      limit at a jump discontinuity $t_0$.}
    \label{fig:jd}
  \end{figure}
\end{lem}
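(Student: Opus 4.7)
The plan is to reduce the claim to a separate analysis of each summand of $f$ and then sum the contributions at $t_0$. Since $\lFrac{\cdot}$ is continuous on $\R \setminus \Z$ and takes values in $[0,1)$, each summand $\lFrac{x_i - \lambda_i t}$ or $\lFrac{x_j + \mu_j t}$ is continuous at $t_0$ unless its argument is an integer there. So all terms not contributing to $l(t_0)$ or $r(t_0)$ can be ignored: they agree with their one-sided limits and cancel in the difference between $f(t_0)$ and $\lim_{t \to t_0 \pm} f(t)$.

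I would then handle the two remaining types of summands. For $i \in I$ with $x_i - \lambda_i t_0 = k \in \Z$ (so $\lambda_i > 0$), as $t \to t_0^-$ the argument $x_i - \lambda_i t$ approaches $k$ from \emph{above}, hence $\lFrac{x_i - \lambda_i t} \to 0$; as $t \to t_0^+$ it approaches $k$ from \emph{below}, hence $\lFrac{x_i - \lambda_i t} \to 1$. The value at $t_0$ itself is $0$. So each such term contributes $0$ to the difference $\lim_{t \to t_0-} f(t) - f(t_0)$ and contributes $+1$ to $\lim_{t \to t_0+} f(t) - f(t_0)$.

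Symmetrically, for $j \in J$ with $x_j + \mu_j t_0 \in \Z$ (so $\mu_j > 0$), the argument $x_j + \mu_j t$ is increasing in $t$, so the left-hand limit of $\lFrac{x_j + \mu_j t}$ at $t_0$ is $1$, the right-hand limit is $0$, and the value at $t_0$ is $0$. Each such term therefore contributes $+1$ to $\lim_{t \to t_0-} f(t) - f(t_0)$ and $0$ to $\lim_{t \to t_0+} f(t) - f(t_0)$.

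Summing these contributions over all $i \in I$ and $j \in J$ for which the relevant argument is integral at $t_0$ gives
\begin{align*}
\lim_{t \to t_0 -} f(t) - f(t_0) &= \rleft|\set{j \in J \with x_j + \mu_j t_0 \in \Z}\rright| = l(t_0), \\
\lim_{t \to t_0 +} f(t) - f(t_0) &= \rleft|\set{i \in I \with x_i - \lambda_i t_0 \in \Z}\rright| = r(t_0),
\end{align*}
which is the claim. The argument is purely a local calculation of one-sided limits of $\lFrac{\cdot}$ at an integer, so no real obstacle is expected; the only subtlety is keeping track of the correct sides (approaching an integer \emph{from above} versus \emph{from below}) for the two sign conventions in the two families of summands.
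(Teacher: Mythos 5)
Your proof is correct and follows essentially the same route as the paper: a termwise local computation of the one-sided limits of each summand at $t_0$, with the non-integral terms dropping out by continuity. The only (harmless) difference is that you work directly with the one-sided limits of the fractional part $\lFrac{\cdot}$ at an integer, whereas the paper first uses $\sum_{i\in I}\lambda_i=\sum_{j\in J}\mu_j$ to rewrite $f$ in terms of floor functions and then applies the analogous one-sided identities for $\floor{\cdot}$; your variant even shows that the balancing hypothesis is not needed for this particular lemma.
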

\begin{proof}
  As $\sum_{i \in I} \lambda_i = \sum_{j \in J} \mu_j$, we can rewrite $f$ as
  follows
  \begin{align*}
    f \colon \R \to \Q; t \mapsto \sum_{i \in I \cup J} x_i - \sum_{i \in I} \floor{x_i + \lambda_i t} -
    \sum_{j \in J} \floor{x_j - \mu_j t} \text{.}
  \end{align*}
  The statement follows by the following properties of the floor-function. Let
  $x,t_0 \in \R$ and $\lambda, \mu \in \Z_{>0}$ with $x + \lambda t_0, x - \mu
  t_0 \in \Z$. Then

  \begin{align*}
    \floor{x + \lambda t_0} &= \lim_{t\to t_0-} \floor{x + \lambda t} + 1\text{,} 
    &\floor{x+\lambda t_0} &= \lim_{t\to t_0+} \floor{x + \lambda t} \text{,}\\
    \floor{x - \mu t_0} &= \lim_{t\to t_0-} \floor{x - \mu t} \text{,}
    &\floor{x - \mu t_0} &= \lim_{t\to t_0+} \floor{x - \mu t} +1\text{.}
  \end{align*}
\end{proof}

\begin{proof}[Proof of Lemma~\ref{lem:fliplemma-crk-1}]
  The coefficients in the linear dependence relation $\sum_{\vv \in \Am_+}
  \lambda_\vv \vv = \sum_{\vv \in \Am_-} \mu_\vv \vv$ can be chosen to be
  positive integers with $\gcd \rleft( \mu_{\vv_-}, \lambda_{\vv_+} \colon \vv_-
  \in \Am_-, \vv_+ \in \Am_+ \rright) = 1$. Let $\sigma' \in \Tau_-$ be the
  unique cone such that $\vect{x} \in \sigma' \rleft[ \xi \rright)$ and denote
  by $\vv' \in \Am_-$ the unique element of $\Am_- \setminus \rleft( \sigma'
  \rright)^{(1)}$. We can represent $\vect{x}$ as a linear combination $\vect{x}
  = \sum_{\vv \in \Am \setminus \set{ \vv'}} x_\vv \vv$ for nonnegative rational
  numbers $x_\vv$. Moreover we set $x_{\vv'} \coloneqq 0$.

  Take $\vv'' \in \Am_+$ such that $\tfrac{x_{\vv''}}{\lambda_{\vv''}} = \min
  \set{ \frac{x_\vv}{\lambda_\vv} \colon \vv \in \Am_+}$, so that $x_\vv -
  \frac{x_{\vv''}}{\lambda_{\vv''}}\lambda_\vv \ge 0$ for all $\vv \in
  \Am_+$. Let $\sigma'' \in \Tau_+$ be the unique cone such that $\vv''$ does
  not generate a ray of $\sigma''$.  We use the dependence relation to change
  the representation of $\vx$ to
  \begin{align*}
    \vect{x} = \sum_{\vv \in \Am_+} \rleft( x_\vv - t \lambda_\vv \rright) \vv +
    \sum_{\vv \in \Am_0} x_\vv \vv + 
    \sum_{\vv \in \Am_-} \rleft( x_\vv + t \mu_\vv \rright) \vv \text{.}
  \end{align*}
  We let $t \in \rleft[ 0, \tfrac{x_{\vv''}}{\lambda_{\vv''}} \rright]$, so that
  the coefficients in all representations of $\vect{x}$ are nonnegative. Further
  we consider the following periodic bounded step function with period $1$:
  \begin{align*}
    f \colon \R \to \Z; t \mapsto
    \sum_{\vv \in \Am_+} \lFrac{x_\vv - t \lambda_\vv} + 
    \sum_{\vv \in \Am_0} \lFrac{x_\vv} +
    \sum_{\vv \in \Am_-} \lFrac{x_\vv + t \mu_\vv} \text{.}
  \end{align*}
  Observe that $f$ takes integer values, as $\sum_{\vv \in \Am} x_\vv \in \Z$.
  Then $f(0) = h^*_{\Tau_-,\xi'} \rleft( \vect{x} \rright)$ and $f\rleft(
  \tfrac{x_{\vv''}}{\lambda_{\vv''}} \rright) = h^*_{\Tau_+,\xi''} \rleft(
  \vect{x} \rright)$ for $\xi' \in \Xi_{C_\Am} \cap \rleft( \sigma'
  \rright)^\circ$ and $\xi'' \in \Xi_{C_\Am} \cap
  \rleft(\sigma''\rright)^\circ$. The gap between $h^*_{\Tau_-,\xi}(\vect{x})$
  and $h^*_{\Tau_-,\xi'}(\vect{x})$ (resp.~the gap between
  $h^*_{\Tau_+,\xi}(\vect{x})$ and $h^*_{\Tau_+,\xi''}(\vect{x})$) can be filled
  by using Proposition~\ref{prop:key-obs}, so it remains to show that the gap
  between $h^*_{\Tau_-,\xi'}(\vect{x})$ and $h^*_{\Tau_+,\xi''}(\vect{x})$ can
  be also filled.

  Let $D$ be the set of potential jump discontinuities of $f$ which lie in the
  interval $\rleft[ 0, \tfrac{x_{\vv''}}{\lambda_{\vv''}} \rright]$.  Let $t_1 <
  t_2$ be two successive potential jump discontinuities. Then, if $f(t_1) <
  f(t_2)$ it also holds that $f(t_1) + r(t_1) \geq f(t_2)$, see
  Figure~\ref{fig:succ-jd}. Similarly, if $f(t_1) > f(t_2)$, then $f(t_1) \leq
  f(t_2) + l(t_2)$.  To finish our proof it is therefore sufficient to prove the
  following two claims:
  
  For each $t \in D \setminus\set{\frac{x_{\vv''}}{\lambda_{\vv''}}}$ with $r(t)
  > 0$, we claim that
  \[
    [f(t), f(t) + r(t) - 1] \cap \Z \subseteq \set{
      \hst{\Tau_+,\widetilde{\xi}}{\vy} \with \widetilde{\xi} \in \Xi_{C_\Am},
      \vy \in C_\Am \cap \rleft( \vv + \Gamma_\Am \rright) }\text{,}
  \]
  and similarly for $t \in D \setminus\set{0}$ with $l(t) > 0$, we claim that
  \[
    [f(t), f(t) + l(t) - 1] \cap \Z \subseteq \set{
      \hst{\Tau_-,\widetilde{\xi}}{\vy} \with \widetilde{\xi} \in \Xi_{C_\Am},
      \vy \in C_\Am \cap \rleft( \vv + \Gamma_\Am \rright) }\text{.}
  \]
  We only show the first claim, as the proof of the second one is analogous.
  For this, fix $t \in D\setminus\set{\frac{x_{\vv''}}{\lambda_{\vv''}}}$ with
  $r(t) > 0$.  Choose $\vv_0 \in \Am_+$ with $x_{\vv_0} - t \lambda_{\vv_0} \in
  \Z$ and let $\sigma_0 \coloneqq \cone \rleft( \Am \setminus \set{\vv_0}
  \rright) \in \Tau_+$ be the unique cone such that $\vv_0$ does not generate a
  ray of $\sigma_0$.  Let $\vy \coloneqq \vx - (x_{\vv_0} - t
  \lambda_{\vv_0})\vv_0 + \sum_{\vv \in \Am_+ \setminus \set{\vv_0 }}\vv$.  For
  $\xi_0 \in \sigma_0^\circ \cap \Xi_{C_\Am}$ it follows from
  Eq.~\eqref{eq:key-obs} that $\hst{\Tau_+, \xi_0}{\vy} = f(t)$.  On the other
  hand, it holds that $\vv_0 = 1/\lambda_{\vv_0}\rleft(\sum_{\vv \in \Am_-}
  \mu_\vv \vv - \sum_{\vv \in \Am_+ \setminus \set{\vv_0 }} \lambda_\vv
  \vv\rright)$ and thus $I_{\vv_0}(\sigma_0) = \Am_+ \setminus \set{\vv_0}$.  By
  Lemma~\ref{lem:tweak-gp}, we can find an element $\xi_1 \in \Xi_{C_\Am}$ with
  $I_{\xi_1}(\sigma_0) = I_{\vv_0}(\sigma_0)$. Using Eq.~\eqref{eq:key-obs}
  again, it follows that $\hst{\Tau_+, \xi_1}{\vy} = f(t) + r(t) - 1$. Finally,
  the gap between $f(t)$ and $f(t) + r(t) - 1$ can be filled by using
  Proposition~\ref{prop:key-obs}.

  \begin{figure}[!ht]
    \centering
    \begin{tikzpicture}
      \draw (0,2) -- (2,2);
      
      \draw[-latex] (0.3,2.1) -- (0,2.1) node[above]{\tiny$\displaystyle \lim_{t\to t_1 +} f(t)$};
      \draw[-latex] (1.7,2.1) -- (2,2.1) node[above]{\tiny$\displaystyle \lim_{t\to t_2-}f(t)$};
      
      \draw[dashed] (0,2) -- (0,0);
      \draw[dashed] (2,2) -- (2,1);
      \fill (0,0) circle (1pt) node[below]{\tiny$f \rleft( t_1 \rright)$};
      \fill (2,1) circle (1pt) node[below]{\tiny$f \rleft( t_2 \rright)$};

      \fill (0.5,0.25) circle (1pt);
      \fill (1,0.5) circle (1pt) node[below,xshift=8pt]{\tiny$h^*_{\Tau_+,\xi_k}
        \rleft( \vect{y}_k \rright)$};
      \fill (1.2,0.6) circle (0.5pt);
      \fill (1.4,0.7) circle (0.5pt);
      \fill (1.6,0.8) circle (0.5pt);

      \draw [decorate,decoration={brace,amplitude=5pt},xshift=-2pt,yshift=0pt]
      (0,0) -- (0,2) node[midway,left,xshift=-3pt]{\tiny$r \rleft( t_1
        \rright)$};

      \draw [decorate,decoration={brace,amplitude=5pt},xshift=2pt,yshift=0pt]
      (2,2) -- (2,1) node[midway,right,xshift=3pt]{\tiny$l \rleft( t_2
        \rright)$};
    \end{tikzpicture}\qquad
    \begin{tikzpicture}
      \draw (0,2) -- (2,2);
      
      \draw[-latex] (0.3,2.1) -- (0,2.1) node[above]{\tiny$\displaystyle \lim_{t\to t_1 +} f(t)$};
      \draw[-latex] (1.7,2.1) -- (2,2.1) node[above]{\tiny$\displaystyle \lim_{t\to t_2-}f(t)$};

      \draw[dashed] (0,2) -- (0,1);
      \draw[dashed] (2,2) -- (2,0);
      \fill (0,1) circle (1pt) node[below]{\tiny$f \rleft( t_1 \rright)$};
      \fill (2,0) circle (1pt) node[below]{\tiny$f \rleft( t_2 \rright)$};

      \fill (1.5,0.25) circle (1pt) node[below,xshift=-10pt]{\tiny$h^*_{\Tau_-,\xi_k}
        \rleft( \vect{y}_k \rright)$};
      \fill (1,0.5) circle (1pt);
      \fill (0.8,0.6) circle (0.5pt);
      \fill (0.6,0.7) circle (0.5pt);
      \fill (0.4,0.8) circle (0.5pt);

      \draw [decorate,decoration={brace,amplitude=5pt},xshift=-2pt,yshift=0pt]
      (0,1) -- (0,2) node[midway,left,xshift=-3pt]{\tiny$r \rleft( t_1
        \rright)$};

      \draw [decorate,decoration={brace,amplitude=5pt},xshift=2pt,yshift=0pt]
      (2,2) -- (2,0) node[midway,right,xshift=3pt]{\tiny$l \rleft( t_2
        \rright)$};
    \end{tikzpicture}
    \caption{The possible cases for two successive potential jump
      discontinuities.}
    \label{fig:succ-jd}
  \end{figure}
\end{proof}

\begin{ex}[Continuation of Example~\ref{ex:change-triang}]
  \label{ex:step-fct}
  The lattice polytope in Example~\ref{ex:change-triang} is an empty circuit
  with unique dependence relation
  \begin{align*}
    \vv_1 + 3 \rleft( \vv_2 + \vv_3 + \vv_4 \rright) + 
    2\rleft( \vv_5 + \vv_6 \rright) = 13 \vv_7 + \vv_8 \text{.}
  \end{align*}
  Let $\Am \coloneqq \set{ \vv_1, \ldots, \vv_8}$ be the associated vector set
  of homogenized lattice points in $\{1\} \times P$. We have $\Am_+ \coloneqq \{
  \vv_1, \ldots, \vv_6 \}$ and $\Am_- \coloneqq \{ \vv_7, \vv_8 \}$. In the
  proof of Lemma~\ref{lem:fliplemma-crk-1}, the coefficients of this dependence
  relation were denoted by $\rleft( \lambda_\vv \rright)_{\vv \in \Am_+}$ and
  $\rleft( \mu_\vv \rright)_{\vv \in \Am_-}$ respectively. There we also
  introduced $\vv'' \in \Am_+$ such that $x_\vv -
  \tfrac{x_{\vv''}}{\lambda_{\vv''}}\lambda_\vv \ge 0$ for all $\vv \in
  \Am_+$. In this example we have $\vv'' = \vv_1$. The periodic bounded step
  function (with period $1$) associated to $\vx$ is given as follows (see
  Figure~\ref{fig:bsf})
  \begin{align*}
    f \colon \R \to \Z; t \mapsto \lFrac{\tfrac{1}{5}-t} + 3\lFrac{\tfrac{4}{5}-3t}
    +2\lFrac{\tfrac{3}{5}-2t} + \lFrac{\tfrac{1}{5}+13t} + \lFrac{t} \text{.}
  \end{align*}
  
  Let us consider, \eg the possible jump discontinuity $t_{0}
  =\tfrac{3}{5}=0.6$. From Figure~\ref{fig:bsf} we can read off $f \rleft( t_{0}
  \rright)=2$ while $\lim_{t \to t_{0}-} f(t) = 3$ and $\lim_{t\to t_{0}+}f(t) =
  5$ which implies that $l \rleft( t_{0} \rright) = 1$ and $r \rleft( t_{0}
  \rright) = 3$.

  \begin{figure}[!ht]
    \centering
    \begin{tikzpicture}
      \begin{axis}[width=\textwidth,height=5cm,axis lines=left,enlarge x
        limits=0.01,enlarge y limits = 0.25]
        \addplot[mark=none] table[y=y] {\mydata};
        \fill (axis cs:0,4) circle (1pt);
        \fill (axis cs:0.06,3) circle (1pt);
        \fill (axis cs:0.14,2) circle (1pt);
        \fill (axis cs:0.2,2) circle (1pt);
        \fill (axis cs:0.22,2) circle (1pt);
        \fill (axis cs:0.27,2) circle (1pt);
        \fill (axis cs:0.29,4) circle (1pt);
        \fill (axis cs:0.30,4) circle (1pt);
        \fill (axis cs:0.37,5) circle (1pt);
        \fill (axis cs:0.45,4) circle (1pt);
        \fill (axis cs:0.52,3) circle (1pt);
        \fill (axis cs:0.60,2) circle (1pt);
        \fill (axis cs:0.68,4) circle (1pt);
        \fill (axis cs:0.75,3) circle (1pt);
        \fill (axis cs:0.80,3) circle (1pt);
        \fill (axis cs:0.83,4) circle (1pt);
        \fill (axis cs:0.91,3) circle (1pt);
        \fill (axis cs:0.93,3) circle (1pt);
        \fill (axis cs:0.98,5) circle (1pt);
        \fill (axis cs:1,4) circle (1pt);
      \end{axis}
    \end{tikzpicture}
    \caption{The periodic bounded step function of
      Example~\ref{ex:step-fct}. The dots indicate the value of $f$ at the
      potential jump discontinuities.}
    \label{fig:bsf}
  \end{figure}
  Finally, we have $f(0) = h^*_{\Tau_-,\Xi_C} \rleft( \vect{x} \rright) = 4$
  while $f \rleft( \tfrac{1}{5} \rright) = h^*_{\Tau_+,\Xi_C} \rleft( \vect{x}
  \rright) = 2$, and thus there is a gap at $3$. We can fill it by looking at
  the potential jump discontinuity $\tfrac{4}{65}$: $f \rleft( \tfrac{4}{65}
  \rright) = h^*_{\Tau_-,\xi} \rleft( \vect{x} - \vect{v}_7 \rright) = 3$ for
  $\xi \in \Xi_C \cap \sigma^\circ$ where $\sigma \in \Tau_-$ is the unique cone
  such that $\vect{v}_7$ does not generate a ray of it.
\end{ex}

\bibliographystyle{amsalpha}
\bibliography{nog}

\end{document}